\definecolor{wwqqww}{rgb}{0.4,0,0,4}
\definecolor{wwffqq}{rgb}{0.4,1,0}
\definecolor{fffftt}{rgb}{1,1,0.2}
\definecolor{ccqqcc}{rgb}{0.8,0,0.8}
\definecolor{ffffww}{rgb}{1,1,0.4}
\definecolor{ttfftt}{rgb}{0.2,1,0.2}
\definecolor{fffftt}{rgb}{1,1,0.2}
\definecolor{ccqqww}{rgb}{0.8,0,0.4}
\definecolor{ffffww}{rgb}{1,1,0.4}
\definecolor{ttfftt}{rgb}{0.2,1,0.2}
\definecolor{uququq}{rgb}{0.25,0.25,0.25}
\definecolor{ffqqtt}{rgb}{1,0,0.2}
\definecolor{ttffff}{rgb}{0.2,1,1}
\definecolor{fffftt}{rgb}{1,1,0.2}
\definecolor{wwffqq}{rgb}{0.4,1,0}
\definecolor{qqqqff}{rgb}{0,0,1}
\definecolor{uququq}{rgb}{0.25,0.25,0.25}
\definecolor{ffqqtt}{rgb}{1,0,0.2}
\definecolor{fffftt}{rgb}{1,1,0.2}
\definecolor{wwffqq}{rgb}{0.4,1,0}
\definecolor{qqqqff}{rgb}{0,0,1}
\definecolor{ttfftt}{rgb}{0.2,1,0.2}
\definecolor{qqqqff}{rgb}{0,0,1}
\definecolor{ffqqtt}{rgb}{1,0,0.2}
\definecolor{fffftt}{rgb}{1,1,0.2}
\definecolor{ffffqq}{rgb}{1,1,0}
\definecolor{qqfftt}{rgb}{0,1,0.2}
\tikzset { domaine/.style 2 args={domain=#1:#2} }
\tikzset{
xmin/.store in=\xmin, xmin/.default=-3, xmin=-3,
xmax/.store in=\xmax, xmax/.default=3, xmax=3,
ymin/.store in=\ymin, ymin/.default=-3, ymin=-3,
ymax/.store in=\ymax, ymax/.default=3, ymax=3,
}
\definecolor{ffffqq}{rgb}{1,1,0}
\definecolor{ttfftt}{rgb}{0.2,1,0.2}
\definecolor{qqffcc}{rgb}{0,1,0.8}
\definecolor{ffwwqq}{rgb}{1,0.4,0}
\definecolor{ffqqqq}{rgb}{1,0,0}
\definecolor{ttffqq}{rgb}{0.2,1,0}
\definecolor{qqqqff}{rgb}{0,0,1}
\definecolor{ffqqtt}{rgb}{1,0,0.2}
\definecolor{ffffqq}{rgb}{1,1,0}
\definecolor{qqfftt}{rgb}{0,1,0.2}
\numberwithin{equation}{section}
\providecommand{\U}[1]{\protect\rule{.1in}{.1in}}
\definecolor{linkcolor}{rgb}{0.00,0.50,0.00}
\providecommand{\U}[1]{\protect\rule{.1in}{.1in}}
\newtheorem{theorem}{Theorem}[section]
\newtheorem{proposition}[theorem]{Proposition}
\newtheorem{definition}{Definition}[section]
\newtheorem{remark}{Remark}[section]
\numberwithin{equation}{section}
\newcommand{\res}{\mathop{\hbox{\vrule height 7pt width .5pt depth 0pt \vrule height .5pt width 6pt depth 0pt}}\nolimits}
\newcommand{\deb}{\rightharpoonup}
\newcommand{\haus}{\mathcal H}
\newcommand{\M}{\mathcal M}
\newcommand{\lcal}{\mathcal L}
\newcommand{\ve}{\varepsilon}
\newcommand{\R}{\mathbb R}
\DeclareMathOperator{\spt}{spt}
\DeclareMathOperator{\Lip}{Lip}
\DeclareMathOperator{\diam}{diam}
\definecolor{zzttqq}{rgb}{0.6,0.2,0}
\definecolor{uququq}{rgb}{0.25,0.25,0.25}
\definecolor{qqqqff}{rgb}{0,0,1}
\definecolor{xdxdff}{rgb}{0.49,0.49,1}
\definecolor{ffffqq}{rgb}{1,1,0}
\definecolor{fftttt}{rgb}{1,0.2,0.2}
\definecolor{ffqqqq}{rgb}{1,0,0}
\definecolor{ffqqtt}{rgb}{1,0,0.2}
\definecolor{ffttqq}{rgb}{1,0.2,0}
\definecolor{xdxdff}{rgb}{0.49,0.49,1}
\definecolor{qqqqff}{rgb}{0,0,1}
\definecolor{ffttww}{rgb}{1,0.2,0.4}
\definecolor{ttqqcc}{rgb}{0.2,0,0.8}
\definecolor{ffqqww}{rgb}{1,0,0.4}
\definecolor{xdxdff}{rgb}{0.49,0.49,1}
\definecolor{qqqqff}{rgb}{0,0,1}
\definecolor{uququq}{rgb}{0.25,0.25,0.25}
\definecolor{ttffww}{rgb}{0.2,1,0.4}
\definecolor{ffqqqq}{rgb}{1,0,0}
\definecolor{qqfftt}{rgb}{0,1,0.2}
\definecolor{zzttqq}{rgb}{0.6,0.2,0}
\definecolor{ffffqq}{rgb}{1,1,0}
\definecolor{qqffww}{rgb}{0,1,0.4}
\definecolor{xdxdff}{rgb}{0.49,0.49,1}
\definecolor{qqqqff}{rgb}{0,0,1}
\definecolor{ffqqqq}{rgb}{1,0,0}
\definecolor{ttqqff}{rgb}{0.2,0,1}
\definecolor{ttqqcc}{rgb}{0.2,0,0.8}
\definecolor{uququq}{rgb}{0.25,0.25,0.25}
\definecolor{xdxdff}{rgb}{0.49,0.49,1}
\definecolor{qqqqff}{rgb}{0,0,1}
\definecolor{ffqqtt}{rgb}{1,0,0.2}
\definecolor{qqffqq}{rgb}{0,1,0}
\definecolor{qqfftt}{rgb}{0,1,0.2}
\definecolor{ffffqq}{rgb}{1,1,0}
\definecolor{ffttcc}{rgb}{1,0.2,0.8}
\definecolor{ffwwqq}{rgb}{1,0.4,0}
\definecolor{ffqqqq}{rgb}{1,0,0}
\definecolor{ffffqq}{rgb}{1,1,0}
\definecolor{qqfftt}{rgb}{0,1,0.2}
\definecolor{ffttcc}{rgb}{1,0.2,0.8}
\definecolor{ffwwqq}{rgb}{1,0.4,0}
\definecolor{ffqqqq}{rgb}{1,0,0}
\definecolor{ffqqtt}{rgb}{1,0,0.2}
\definecolor{ffffqq}{rgb}{1,1,0}
\definecolor{ttffqq}{rgb}{0.2,1,0}
\definecolor{wwwwww}{rgb}{0.4,0.4,0.4}
\definecolor{zzqqzz}{rgb}{0.6,0,0,6}
\begin{document}
\title[Summability of transport densities]
{Summability estimates on transport densities\\ with Dirichlet regions on the boundary\\ via symmetrization techniques}
\author[S. Dweik, F. Santambrogio]{Samer Dweik and Filippo Santambrogio}
\address{Laboratoire de Math\'ematiques d'Orsay, Univ. Paris-Sud, CNRS, Universit\'e Paris-Saclay, 91405 Orsay Cedex, France}
\email{samer.dweik@math.u-psud.fr, filippo.santambrogio@math.u-psud.fr}
\maketitle

\begin{abstract} In this paper we consider the mass transportation problem in a bound\-ed
domain $\Omega$ where a positive mass $f^+$ in the interior is sent to the boundary $\partial\Omega$, appearing for instance in some shape optimization problems, and we prove summability estimates on the associated transport density $\sigma$, which is the transport density from a diffuse measure to a measure on the boundary $f^-=P_\#f^+$ ($P$ being the projection on the boundary), hence singular. Via a symmetrization trick, as soon as $\Omega$ is convex or satisfies a uniform exterior ball condition, we prove $L^p$ estimates (if $f^+\in L^p$, then $\sigma\in L^p$). 
Finally, by a counter-example we prove that if $f^+ \in L^{\infty}(\Omega)$ and $f^-$ has bounded density w.r.t. the surface measure on $\partial\Omega$, the transport density $\sigma$ between $f^+$ and $f^-$ is not necessarily in $L^{\infty}(\Omega)$, which means that the fact that $f^-=P_\#f^+$ is crucial.
\end{abstract}

\section{Introduction} 

In optimal transport, the so-called {\it transport density} is an important notion specific to the case of the Monge cost $c(x,y)=|x-y|$, which played different roles in the development of the theory. For instance, in \cite{EvaGan} it was a key object for one of the first proofs of existence of an optimal transport map for such a linear cost. More precisely, such a map was constructed by following integral curves of a vector field $v$ minimizing $\int |v(x)|dx$ under a divergence constraint $\nabla\cdot v=f^+-f^-$, where $f^\pm$ are the two measures to be transported one onto the other, and the transport density $\sigma$ is nothing but $|v|$. More precisely, $\sigma$ and $v$ are intimately connected with the Kantorovich potential $u$: we have $v=-\sigma\nabla u$ and $(\sigma,u)$ solves a particular PDE systems, called Monge-Kantorovich system
\begin{equation}\label{MKsyst1}
\begin{cases}
-\nabla\cdot (\sigma\nabla u)=f=f^+-f^-&\mbox{ in }\Omega\\
\sigma\nabla u\cdot n=0 &\mbox{ on }\partial\Omega\\
|\nabla u|\leq 1&\mbox{ in }\Omega,\\
|\nabla u|= 1&\sigma-\mbox{a.e. }\end{cases}
\end{equation}
The trasnport density $\sigma$ and the optimal vector field $v$ appear in many applications. The role of $\sigma$ has been clarified for its applications to shape optimization problems in \cite{BouButJEMS}. In \cite{Pri,DumIgb,PieCan,DePJim}, the same pair $(\sigma,u)$ also models (in a statical or dynamical framework) the configuration of stable or growing sandpiles, where $u$ gives the pile shape and $\sigma$ stands for sliding layer. The minimal flow problem $\min\{\int|v(x)|dx\,:\,\nabla\cdot v=f\}$, first introduced in \cite{1}, has many strictly convex variants, dating back to \cite{1} itself and later studied in \cite{BraCarSan}, modeling traffic congestion. The minimization of the $L^1$ norm under divergence constraints also has applications in image processing, as in \cite{LelLorSchVal,BriBurGra}, in particular because the $L^1$ norm (and not its strictly convex variants) induces sparsity.

In the framework of both traffic congestion and membrane reinforcement, in \cite{ButOudVel} the authors use a variant of this problem, already present in  \cite{BouButJEMS} and \cite{ButOudSte}, where the density $f$ has not zero mass, but the Monge-Kantorovich system is complemented with Dirichlet boundary condition. In optimal transport terms, this corresponds to the possibility of sending some mass to the boundary. The easiest version of the system becomes 
\begin{equation}\label{MKsyst2}
\begin{cases}
-\nabla\cdot (\sigma\nabla u)=f^+\geq 0&\mbox{ in }\Omega\\
u=0 &\mbox{on }\partial\Omega,\\
|\nabla u|\leq 1&\mbox{ in }\Omega,\\
|\nabla u|= 1&\sigma-\mbox{a.e. }\end{cases}
\end{equation}
and corresponds to an optimal transport problem between $f^+$ and an unknown measure $f^-$, supported on $\partial\Omega$. By optimality, $f^-$ can be proven to be equal to the image of $f^+$ through the projection onto the boundary $\partial\Omega$.

In this paper we are mainly concerned with summability estimates of $\sigma$ in terms of the corresponding summability properties of $f^+$. It is well known, from the works of Feldman-McCann, that $\sigma$ is unique and $L^1$ (i.e., absolutely continuous) as soon as one of the two measures $f^+$ or $f^-$ is absolutely continuous. Then, \cite{6,7,San09} analyzed $L^p$ summability: for $p<d/(d-1)$ ($d$ being the dimension of the ambient space) $\sigma$ is $L^p$ as soon as  one of the two measures $f^+$ or $f^-$ is $L^p$, while for $p\geq d':=d/(d-1)$ (including $p=\infty$) this requires that both are $L^p$. In general, it is not difficult to see that $\sigma$ is never more regular than $f$, and the higher regularity question (i.e. continuity, $C^{0,\alpha}$, $W^{1,p}$\dots) is an open question, matter of current research.

Yet, the summability question in the Dirichlet case is an interesting one, required in some estimates in \cite{ButOudVel}, and it is non-trivial for $p\geq d'$ because $f^-$ is singular. In this paper, thanks to a symmetrization argument, we give positive answer under some geometric conditions on $\partial\Omega$. Note that \cite{5} already contained a similar, but weaker, result: indeed, the methods used in \cite{5} allows to get the $L^p$ estimate we look for, for $p<\infty$, on a convex domain, since a boundary term in an integration by parts happens to have a sign. As far as results are concerned (since, anyway, the strategy is completely different), the novelty in the present paper are the case $p=\infty$ and the case where $\Omega$ only satisfies an exterior ball condition, instead of being convex.


The paper is organized as follows. In Section \ref{2} we recall some well known facts, terminology and notations concerning the usual Monge-Kantorovich problem, 
its dual formulation, the role of the transport density, and the variant where a Dirichlet region is inserted. In Section \ref{3}, we will show our main results, namely that the transport density $\sigma$ between $f^+$ and $(P_{\partial\Omega})_{\#} f^+$ is in $L^p(\Omega)$ provided $f^+ \in L^p(\Omega)$, under the assumption that $\Omega$ satisfies an exterior ball condition. This is done via a symmetrization technique which is quite easy to explain in the case where $\Omega$ is a polyhedron: in this case, $\sigma$ is equal to the restriction to $\Omega$ of the transport density from $f^+$ to a new density $f^-$ obtained by symmetrizing $f^+$ across the faces composing the boundary $\partial\Omega$. A similar argument can be performed for domains with ``round'' faces (called {\it round polyhedra}) and, by an approximation argument, for arbitrary domains satisfying an exterior ball condition.  The presentation, for completeness and pedagogical purposes, goes step-by-step from the convex case to the case of domains with an exterior ball condition, by aproximations, and is done for every $p$. At the end of the section we will explain how this could be shortened by directly considering general domains, and how to deduce the result for any $p$ from the case $p=\infty$.
Finally, Section \ref{6} gives an example where $f^+ \in L^{\infty}(\Omega)$ and $f^- \leq C \mathcal{H}^{d-1}\res\partial\Omega$ but the transport density between $f^+$ and $f^-$ is not in $L^{\infty}(\Omega)$. This answer (negatively) to a natural question which could arise reading our results: the uniform ball condition guarantees that $f^+ \in L^{\infty}(\Omega)$ implies that $(P_{\partial\Omega})_{\#} f^+$ has bounded density w.r.t. the Hausdorff measure on $\partial\Omega$ and one could wonder whether this last condition is the good assumption to prove $\sigma\in L^\infty$. But the answer is not.

\section{Preliminaries} \label{2}

\subsection{About the Monge problem, the Beckmann problem, and transport densities}

Given two  finite positive Borel measures $f^+$ and $f^-$ on a compact convex domain $\Omega\subset\R^d$, satisfying the mass balance condition $f^+(\Omega)=f^-(\Omega)$, we consider
$$\Pi(f^+,f^-):=\left\{\gamma \in \mathcal{M}^+( \Omega \times \Omega):\;(\Pi_{x})_{ \#}\gamma =f^+\;,\;(\Pi_{y})_{ \#}\gamma =f^-\;\right\}$$
where $\Pi_{x}$ and $\Pi_y$ are the two projections of $\Omega \times \Omega$ onto $\Omega$. We then consider the minimization  problem
$$\min\left\{\int_{ \Omega \times \Omega}|x-y|\mathrm{d}\gamma:\;\gamma \in \Pi(f^+,f^-)\right\}\qquad(\mbox{KP}).$$
The minimal value of this problem is called $W_1(f^+,f^-)$ and it can be proven to be a distance on the space of fixed-mass measures (usually we use probability measures; if $\Omega$ is unbounded the distance is restricted to the set of measures with finite  first-order moment).

The above problem is the Kantorovich version of the so-called Monge's Problem, which reads
$$\min\left\{\int_{\Omega}|x-T(x)|\mathrm{d}f^+:\;T_\#f^+=f^-\right\}\qquad(\mbox{MP}).$$
For the details about Optimal Transport theory, its history, and the main results, we refer to \cite{villani} or \cite{OTAM}. Even if some of the present considerations are more general than that, for simplicity of the exposition we will assume that $f^+$ is  absolutely continuous w.r.t. the Lebesgue measure. Then, the two problems (KP) and (MP) are equivalent, in the sense that every transport map $T$ such that $T_\#f^+=f^-$ induces a transport plan $\gamma:=(id,T)_\#f^+$ and that, among the optimal $\gamma$ in (KP), there exists one which has this form (on the contrary, there is no uniqueness, and other optimal transport plans could be of different form). The existence of an optimal map $T$ in this problem (or the fact that an optimal $\gamma$ is of the form $(id,T)_\#f^+$) has been a matter of active study between the end of the '90s and the beginning of this century, and we cite in particular \cite{EvaGan,TruWan,CafFelMcC,ambro,ambpra}

In the analysis of the optimal transport problem (KP) above, a key tool consists in convex duality. Indeed, it is possible to prove that the maximization problem below
$$\max\left\{\int_{\Omega}u\,\mathrm{d}(f^+-f^-):\;u\in \Lip_1(\Omega)\right\}\qquad(\mbox{DP})$$
is the dual of (KP): it can be obtained from (KP) by a suitable inf-sup exchange procedure, its value equals $\min$(KP), and for every admissible $\gamma$ in (KP) and every admissible $u$ in (DP) we have
$$\int_{\Omega \times \Omega}|x-y|\mathrm{d}\gamma\geq \int_{\Omega \times \Omega}(u(x)-u(y))\mathrm{d}\gamma=\int_{\Omega}u(x)\mathrm{d}f^+(x)-
\int_{\Omega}u(y)\mathrm{d}f^-(y)=\int_{\Omega}u\mathrm{d}(f^+-f^-).$$
The equality of the two optimal values implies that optimal $\gamma$ and $u$ satisfy $u(x)-u(y)=|x-y|$ on the support of $\gamma$, but also that, whenever we find some admissible $\gamma$ and $u$ satisfying $\int_{\Omega \times \Omega}|x-y|\mathrm{d}\gamma=\int_{\Omega}u\mathrm{d}(f^+-f^-)$, they are both optimal. The maximizers in (DP) are called {\it Kantorovich potentials}.

 In such a theory it is classical to associate with any optimal transport plan $\gamma$ a positive measure $\sigma$ on $\Omega$, called transport density, which represents the amount of transport taking place in each region of $\Omega$. This measure $\sigma$ is defined by 
$$ <\sigma,\varphi>=\int_{\Omega \times \Omega}\mathrm{d}\gamma(x,y)\int_{0}^{1}\varphi(\omega_{x,y}(t))|\dot{\omega}_{x,y}(t)|\mathrm{d}t\;\;\; \mbox{for all}\;\;\varphi\;\in\;C(\Omega)$$
where $\omega_{x,y}$ is a curve parametrizing the straight line segment connecting $x$ to $y$. Notice in particular that one can write 
\begin{equation}\label{sigmaA}
\sigma(A)=\int_{\Omega \times \Omega} \mathcal{H}^1(A \cap [x,y])\mathrm{d}\gamma(x,y)\;\;\;\mbox{for every Borel set}\;A
\end{equation}
where $\mathcal{H}^1$ stands for the 1-dimensional Hausdorff measure. This means that $\sigma(A)$ stands for ``how much'' the transport takes place in $A$, if particles move from their origin $x$ to their destination $y$ on straight lines. We recall some properties of $\sigma$

\begin{proposition}\label{prop transp dens}
 Suppose $f^+\ll\lcal^d$. Then, the transport density $\sigma$ is unique (i.e does not depend on the choice of the optimal transport plan $\gamma$) and $\sigma \ll \mathcal{L}^d$.
Moreover, if both $ f^+, f^- \in L^p(\Omega)$, then $\sigma$ also belongs to $L^p(\Omega)$.\end{proposition}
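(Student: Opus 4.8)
The plan is to reduce the statement to a one-dimensional problem along transport rays, after first replacing a generic optimal plan by one carried by a transport map. Since $f^+\ll\lcal^d$, the classical existence theory recalled above provides an optimal map $T$ for (MP), so that $\gamma:=(\id,T)_\#f^+$ is an optimal plan for (KP). Unwinding the definition of the transport density with the natural parametrization $\omega_{x,y}(t)=(1-t)x+ty$, for which $|\dot\omega_{x,y}(t)|=|x-y|$, one obtains the representation
\[
\sigma=\int_0^1\sigma_t\,\dd t,\qquad \sigma_t:=(\pi_t)_\#\bigl(|x-y|\,\gamma\bigr)=(T_t)_\#\bigl(|\id-T|\,f^+\bigr),
\]
where $\pi_t(x,y):=(1-t)x+ty$ and $T_t:=(1-t)\,\id+tT$. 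Every $\sigma_t$ is a positive measure of total mass $W_1(f^+,f^-)$, and since $|x-T(x)|\le\diam\Omega=:D$ we have $\sigma_t\le D\,(T_t)_\#f^+$, so it suffices to control the measures $(T_t)_\#f^+$.

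For uniqueness, I would use that optimality forces $u(x)-u(y)=|x-y|$ on $\spt\gamma$ for a Kantorovich potential $u$, so that every segment $[x,y]$ with $(x,y)\in\spt\gamma$ is contained in a maximal segment on which $u$ has unit slope --- a transport ray. Up to an $\lcal^d$-negligible set, the transport set is foliated by such rays; disintegrating $f^+$, $f^-$ and an \emph{arbitrary} optimal plan along this foliation, the transport problem splits into independent one-dimensional ones, and on a ray $R_a$ the transport density is the explicit measure $|F^+_a-F^-_a|\cdot\mathcal H^1\res R_a$, where $F^\pm_a$ are the cumulative distribution functions of the two one-dimensional marginals on $R_a$. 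Since this expression involves only $f^\pm$, reassembling over the foliation shows that $\sigma$ does not depend on the chosen optimal plan.

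For absolute continuity and the $L^p$ bound, the point is a change of variables along the rays. On the transport set one has $T_t=\id-t\,|\id-T|\,\nabla u$, and for $t<1$ the map $T_t$ is injective up to an $\lcal^d$-negligible set and a.e.\ differentiable with strictly positive Jacobian; this relies on the semiconcavity of $u$ on the transport set, whose modulus is controlled by the reciprocal of the distance to the ray endpoints, together with the no-crossing property of transport rays. By the area formula $(T_t)_\#f^+\ll\lcal^d$ for every $t<1$, hence $\sigma_t\ll\lcal^d$ and $\sigma=\int_0^1\sigma_t\,\dd t\ll\lcal^d$; quantitatively, the same computation yields $L^p$ bounds on $\sigma_t$ in terms of $\|f^+\|_{L^p(\Omega)}$ which are uniform for $t$ in compact subsets of $[0,1)$ and degenerate only as $t\to1$ (the rate of degeneration being exactly what produces the one-sided threshold $p<d/(d-1)$ recalled in the Introduction). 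Running the same argument from the target side --- legitimate here because $f^-\in L^p(\Omega)$ is in particular absolutely continuous, so that an optimal map from $f^-$ to $f^+$ exists and, by the uniqueness just proved, computes the same $\sigma$ --- gives $L^p$ bounds on $\sigma_t$ in terms of $\|f^-\|_{L^p(\Omega)}$, uniform for $t$ in compact subsets of $(0,1]$. Splitting $\int_0^1\sigma_t\,\dd t=\int_0^{1/2}\sigma_t\,\dd t+\int_{1/2}^1\sigma_t\,\dd t$ and estimating the first integrand by the source-side bound and the second by the target-side bound, both integrands are dominated by constants on their respective intervals for \emph{every} $p\in[1,\infty]$ (the endpoints $t=0,1$, where the bounds blow up, having been discarded), whence $\|\sigma\|_{L^p(\Omega)}\le\int_0^1\|\sigma_t\|_{L^p(\Omega)}\,\dd t\le C_{d,p,\Omega}\bigl(\|f^+\|_{L^p(\Omega)}+\|f^-\|_{L^p(\Omega)}\bigr)$.

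The genuine difficulty --- the step I expect to be the main obstacle --- is the geometric and measure-theoretic input invoked above: that the transport set is, off an $\lcal^d$-null set, foliated by transport rays, that the set of ray endpoints is $\lcal^d$-negligible, the semiconcavity estimate for the Kantorovich potential, and the consequent injectivity of the intermediate maps $T_t$ together with the controlled lower bound on their Jacobians (and its sharp blow-up rate as $t\to1$). All the rest is bookkeeping; for the details I refer to \cite{6,7,San09} and the references therein.
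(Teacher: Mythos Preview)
The paper does not actually prove this proposition: immediately after its statement it simply writes ``These properties are well-known in the literature, and we refer to \cite{5}, \cite{6}, \cite{7}, \cite{3} and \cite{San09}, or chapter 4 in \cite{OTAM}.'' Your outline is essentially the strategy of those references --- the ray-disintegration argument for uniqueness is that of Feldman--McCann \cite{3}, and the interpolation representation $\sigma=\int_0^1\sigma_t\,\dd t$ together with the symmetric splitting at $t=1/2$ (source-side estimate on $[0,1/2]$, target-side on $[1/2,1]$) is precisely the mechanism of \cite{San09} --- so you are not diverging from the paper, you are supplying what it chose to omit. The caveats you flag at the end (negligibility of ray endpoints, countable Lipschitz/semiconcavity structure of the potential on the transport set, injectivity and Jacobian lower bounds for $T_t$) are exactly the nontrivial ingredients, and deferring them to \cite{6,7,San09,3} is what the paper itself does; one small wording issue is your parenthetical about ``discarding'' the endpoints $t=0,1$ --- nothing is discarded, rather each endpoint is handled by the side on which the bound does \emph{not} blow up (source-side is fine at $t=0$, target-side at $t=1$).
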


These properties are well-known in the literature, and we refer to \cite{5}, \cite{6},  \cite{7}, \cite{3} and \cite{San09}, or chapter 4 in \cite{OTAM}. 
The transport density $\sigma$ also arises in the following minimization problem:
$$ \min \left\{\int_{\Omega}|v(x)|\mathrm{d}x:\; v \in L^1(\Omega,\mathbb{R}^d),\;\nabla\cdot v=f^+ -f^-,\; v\cdot n=0 \mbox{ on }\partial\Omega \right\}\qquad \mbox{(BP)}.$$
This is the so-called {\it continuous transportation model} proposed by Beckmann in \cite{1}. Indeed, it is easy to check that the vector field $v$ given by $v=-\sigma\nabla u$ is a solution of the above minimization problem. Also, it is possible to prove (see, for instance, Theorem 4.13 in \cite{OTAM}) that all minimizers are of this form, and that the minimizer is unique as soon as $f^+\ll\lcal^d$. Beware that (BP) should be stated in the space of vector measures, and the divergence condition (with its no-flux boundary condition) should be written in weak form, i.e.
\begin{equation}
\label{BPmeasures}
\min \left\{||v||_\M:\; v \in \M(\Omega,\mathbb{R}^d),\;\int \nabla\phi\cdot \mathrm{d}v+\int \phi  \,\mathrm{d}(f^+ -f^-)=0\;\mbox{ for all }\phi\in C^1(\overline\Omega) \right\}.
\end{equation}
In the above problem, $||v||_\M$ is the norm in the space of measures, but it is possible to prove that the optimizer $v$ is absolutely continuous as soon as $f^+\ll\lcal^d$.

The connection between Beckmann's problem and the optimal transport problem with cost $|x-y|$ can also be seen as a consequence of convex duality. Indeed, if one uses the dual (weak) version of the divergence constraint, we can obtain a dual problem by interchanging inf and sup:
$$\sup\left\{ \int u\,\mathrm{d}(f^+ -f^-)+\inf_v \int (v\cdot \nabla u+|v(x)|)\mathrm{d}x\right\}$$
which becomes
$$\sup\left\{ \int u\,\mathrm{d}(f^+ -f^-)\;:\;|\nabla u|\leq 1\right\}.$$
It is then enough to observe that the condition $|\nabla u|\leq 1$ is equivalent to $u\in \Lip_1$ (here is where we use the convexity of $\Omega$) to get back to (MP) and (KP). 

The primal-dual optimality conditions in the above problems can also be written in a PDE form: $\sigma$ solves, together with the Kantorovich potential $u$, the Monge-Kantorovich system \eqref{MKsyst1}.
%
%
We finish this section with a stability result that we will need in the sequel, and that we express in terms of the minimal flow problem (BP):
\begin{proposition}\label{stabsigma}
Suppose $f^+\ll\lcal^d$ is fixed, $f^-_n\deb f^-$ and suppose that $\spt(f^+),\spt(f^-_n)$ are all contained in a same compact set. Let $\sigma_n$ be the transport density from $f^+$ to $f^-_n$ and $v_n$ the corresponding minimizer of {\rm (BP)}. Then $\sigma_n\deb \sigma$ and $v_n\deb v$, where $\sigma$ is the transport density from $f^+$ to $f^-$ and $v$ is the corresponding minimizer in {\rm (BP)}.
\end{proposition}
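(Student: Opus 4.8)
The plan is to combine weak-$*$ compactness of measures with the uniqueness statements already available, namely Proposition~\ref{prop transp dens} (uniqueness of $\sigma$) and the uniqueness of the minimizer of (BP) when $f^+\ll\lcal^d$. Fix a compact set $K$ containing $\spt(f^+)$ and all the $\spt(f^-_n)$ (this is part of the hypotheses; since $\Omega$ is compact one may take $K=\Omega$) and set $D=\diam(K)$, $m=f^+(\Omega)$. Testing $f^-_n\deb f^-$ against the constant $1$ gives $f^-(\Omega)=m$, so mass balance holds also in the limit and (BP) makes sense for $f^-$. Let $\gamma_n\in\Pi(f^+,f^-_n)$ be an optimal plan producing $\sigma_n$; then $\gamma_n$ is concentrated on $K\times K$, $\sigma_n(\Omega)=\int|x-y|\,\dd\gamma_n=W_1(f^+,f^-_n)\le Dm$ and $\|v_n\|_\M=\sigma_n(\Omega)\le Dm$. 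Hence $(\gamma_n)$, $(\sigma_n)$, $(v_n)$ are bounded in the relevant spaces of measures, and, along a subsequence (not relabelled), $\gamma_n\deb\tilde\gamma$ in $\M^+(\omom)$, $\sigma_n\deb\tilde\sigma$ and $v_n\deb\tilde v$ in $\M(\Omega,\R^d)$.

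First I would identify $\tilde\gamma$. Passing to the limit in the marginal constraints (which test $\gamma_n$ against functions depending on $x$ only, or on $y$ only) yields $\tilde\gamma\in\Pi(f^+,f^-)$; and since $W_1$ is continuous for weak convergence of measures with uniformly compact support, $\int|x-y|\,\dd\tilde\gamma=\lim_n\int|x-y|\,\dd\gamma_n=\lim_n W_1(f^+,f^-_n)=W_1(f^+,f^-)$, so $\tilde\gamma$ is optimal between $f^+$ and $f^-$. Next, for $\varphi\in C(\Omega)$ the defining formula for the transport density rewrites as $\langle\sigma_\gamma,\varphi\rangle=\int_{\omom}\Phi_\varphi\,\dd\gamma$ with $\Phi_\varphi(x,y)=|x-y|\int_0^1\varphi((1-t)x+ty)\,\dd t$, which is a continuous bounded function on $\omom$; therefore $\langle\sigma_n,\varphi\rangle\to\langle\sigma_{\tilde\gamma},\varphi\rangle$, i.e. $\tilde\sigma=\sigma_{\tilde\gamma}$. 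Since $f^+\ll\lcal^d$, the transport density between $f^+$ and $f^-$ is unique by Proposition~\ref{prop transp dens}, so $\tilde\sigma=\sigma$.

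For the vector field I would avoid passing to the limit in $v_n=-\sigma_n\nabla u_n$ (which would require controlling the gradients of the potentials) and argue directly with the weak form \eqref{BPmeasures} of (BP). For every $\phi\in C^1(\overline\Omega)$, $\int\nabla\phi\cdot\dd v_n+\int\phi\,\dd(f^+-f^-_n)=0$; since $\nabla\phi$ is continuous, $v_n\deb\tilde v$ and $f^-_n\deb f^-$, this passes to the limit, so $\tilde v$ is admissible for (BP) with datum $f^-$. By lower semicontinuity of the total variation under weak-$*$ convergence, $\|\tilde v\|_\M\le\liminf_n\|v_n\|_\M=\liminf_n W_1(f^+,f^-_n)=W_1(f^+,f^-)$, which is precisely the minimal value of (BP) for $f^-$; hence $\tilde v$ is a minimizer, and since $f^+\ll\lcal^d$ it is the unique one, so $\tilde v=v$. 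Because the limits $\sigma$ and $v$ are independent of the subsequence, the usual subsequence argument upgrades this to convergence of the full sequences, $\sigma_n\deb\sigma$ and $v_n\deb v$.

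The one point deserving care, and the main (mild) obstacle, is the continuity of the optimal cost $f^-_n\mapsto W_1(f^+,f^-_n)$: it is what promotes the weak limit $\tilde\gamma$ of optimal plans to an optimal plan and what closes the lower-semicontinuity estimate for $\tilde v$. On a fixed compact set this is classical (e.g. $W_1$ metrizes weak convergence of measures of fixed mass supported there), but if one wishes to stay self-contained it can be obtained by a two-sided argument: weak lower semicontinuity of $\gamma\mapsto\int|x-y|\,\dd\gamma$ gives $W_1(f^+,f^-)\le\liminf_n W_1(f^+,f^-_n)$, while the reverse $\limsup$ inequality follows by building, out of an optimal plan between $f^+$ and $f^-$, competitors between $f^+$ and $f^-_n$ with asymptotically the same cost (gluing $f^-$ to $f^-_n$ through a plan realizing $W_1(f^-,f^-_n)\to0$ and composing). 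Everything else is routine weak-$*$ compactness together with the uniqueness results recalled in the Preliminaries.
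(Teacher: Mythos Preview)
Your argument is correct, and for the vector field $v_n$ it is essentially identical to the paper's proof: compactness, stability of the divergence constraint, lower semicontinuity of the mass together with continuity of $W_1$ to get optimality, and then uniqueness of the (BP) minimizer when $f^+\ll\lcal^d$.

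For the transport densities, however, you take a genuinely different route. The paper does not introduce the optimal plans $\gamma_n$ at all: it exploits the identity $\sigma_n=|v_n|$, observes that any weak limit $\tilde\sigma$ of $\sigma_n$ must dominate $|v|$ (lower semicontinuity of the total variation applied to $v_n\deb v$), and then matches total masses, $\int\dd\tilde\sigma=\lim W_1(f^+,f^-_n)=W_1(f^+,f^-)=\int|v|$, to force $\tilde\sigma=|v|=\sigma$. Your approach instead passes to the limit in the explicit representation $\langle\sigma_\gamma,\varphi\rangle=\int_{\omom}\Phi_\varphi\,\dd\gamma$, using that $\Phi_\varphi$ is continuous and bounded on the compact product, and then invokes Proposition~\ref{prop transp dens} to identify the limit. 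Your way is arguably more transparent and decouples the identification of $\sigma$ from that of $v$ (in particular it would survive in situations where one does not want to, or cannot, use the relation $\sigma=|v|$), at the price of carrying the auxiliary sequence $(\gamma_n)$; the paper's argument is shorter once that relation is taken for granted. Both proofs ultimately rely on the same ingredient you singled out, namely the continuity of $f^-\mapsto W_1(f^+,f^-)$ on measures with uniformly compact support.
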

\begin{proof}
From $\int|v_n|=W_1(f^+,f^-_n)$ we have a bound on the $L^1$ norm of $v_n$. Hence, up to subsequences, we can assume $v_n\deb \tilde v$ in the sense of measures. The condition $\nabla\cdot v_n=f^+-f^-_n$ passes to the limit, thus giving $\nabla\cdot \tilde v=f^+-f^-$. Moreover, from the semicontinuity of the mass we get $ || \tilde v ||_\M \leq W_1(f^+,f^-)$, which means that $\tilde v$ is optimal in \eqref{BPmeasures} and $ || \tilde v ||_\M= W_1(f^+,f^-)$. Hence $\tilde v=v$ by uniqueness, and we have full convergence of the sequence since the limit does not depend on the subsequence. Concerning $\sigma_n$, we can assume $\sigma_n\deb \tilde\sigma$ (in the sense of measures), and we get $\tilde\sigma\geq |v|$. But the mass passes to the limit, hence $\int d\tilde\sigma=W_1(f^+,f^-)=\int |v|$. This proves $\tilde\sigma=|v|$, which means $\tilde\sigma=\sigma$ and, again, the limit does not depend on the subsequence.
\end{proof}

\subsection{About optimal transport with Dirichlet regions}

In \cite{BouButJEMS} and in \cite{ButOudSte} a transport problem between measures with different mass is proposed, in the presence of a so-called {\it Dirichlet Region}. A Dirichlet region $\Sigma\subset \overline\Omega$ is a closed set where transportation is free, and one can study the following problem
$$
\min\left\{\int_{\overline\Omega \times \overline\Omega }|x-y|\,\mathrm{d}\gamma
   ,\;\gamma \in \Pi_\Sigma(f^+,f^-) \right\},
  $$
where 
$$\Pi_\Sigma(f^+,f^-) :=\left\{\gamma\in \mathcal{M}^+(\overline\Omega \times \overline\Omega)\;:\; ((\Pi_x)_\#\gamma)\res(\overline\Omega\setminus\Sigma)=f^+,\; ((\Pi_y)_\#\gamma)\res(\overline\Omega\setminus\Sigma)=f^-\right\}.$$
It is not difficult to see that this problem corresponds to a transport problem where it is possible to add arbitrary mass to $f^\pm$ on $\Sigma$, but the transport cost between points on $\Sigma$ is set to $0$. A simple variant, that we will not develop here, concerns the case where the mass we add on $\Sigma$ ``pays'' something, i.e. adding a cost $\int g^+(x)\mbox{d}((\Pi_x)_\#\gamma)\res\Sigma+\int g^-(y)\mbox{d}((\Pi_y)_\#\gamma)\res\Sigma$. This is what is done, for instance, in \cite{4} in the case $\Sigma=\partial\Omega$, where $g^\pm$ represent import/export costs.

Anyway, here we consider the easiest case, which is $f^-=0$. In this case the transport plan $\gamma$ can only transport mass from the density $f^+$ on $\overline\Omega$ to $\Sigma$. Since its marginal $(\Pi_y)_\#\gamma$ on $\Sigma$ is completely arbitrary, then it is clear that the optimal choice is to take it equal to $(P_\Sigma)_\#f^+$, where 
$$ P_{\Sigma}(x)=\mbox{argmin}\left\{|x-y|,\;y \in \Sigma\right\} \; \mbox{for all} \;x.$$
By this definition, $P_\Sigma$ is a priori multivalued, but the argmin is a singleton on all the points where the function $x\mapsto d(x,\Sigma)$ is differentiable, which means a.e. (here as well, the assumption $f^+\ll\lcal^d$ is crucial).

In this paper we will concentrate on the case where $\Sigma$ is a negligible (lower-dimensional) subset of $\overline\Omega$. More precisely, for a ``nice'' domain $\Omega$, we will consider $\Sigma=\partial\Omega$ (as in \cite{BouButJEMS,ButOudVel,4}). This means that we will consider the following problem
$$
\min\left\{\int_{\Omega \times \Omega }|x-y|\mathrm{d}\gamma
   ,\;\gamma \in \Pi(f^+,(P_{\partial\Omega})_{\#} f^+) \right\}. 
  $$
  This is also the same as 
 $$ \min\left\{\int_{\Omega \times \Omega }|x-y|\mathrm{d}\gamma
   ,\;(\Pi_x)_\#\gamma=f^+,\; \spt((\Pi_y)_\#\gamma)\subset\partial\Omega \right\}. 
  $$
  In the Beckmann's formulation, this also amounts to solve
  \begin{equation}\label{beckbord}
   \min \left\{\int_{\Omega}|v(x)|\mathrm{d}x:\; v \in L^1(\Omega,\mathbb{R}^d),\;\spt(\nabla\cdot v-f^+ )\subset \partial\Omega\right\}\qquad \mbox{(BP)}.
   \end{equation}
If we write the condition $\spt(\nabla\cdot v-f^+ )\subset \partial\Omega$ as $\nabla\cdot v=f^+$ inside $\overset{\circ}\Omega$, we can express this condition in a weak sense by testing agains functions $u\in C^1_c(\Omega)$ (or $C^1$ functions, vanishing on $\partial \Omega$), and the dual of this problem becomes 
$$\sup\left\{ \int u\,\mathrm{d}(f^+ -f^-)\;:\;u\in C^1(\Omega),\,|\nabla u|\leq 1,\, u=0\,\mbox{ on }\partial\Omega\right\}.$$
This relaxes on the set of $\Lip_1$ functions vanishing on the boundary $\partial \Omega$. In this way, the Dirichlet region $\Sigma$ really hosts a Dirichlet boundary condition!

\begin{remark} Note that a $W_2$ version of this same problem (i.e. the problem 
$$
\min\left\{\int_{\overline\Omega \times \overline\Omega }|x-y|^2\mathrm{d}\gamma
   ,\;\gamma \in \Pi_\Sigma(f^+,f^-) \right\},
  $$
   used to define a distance on $\M(\Omega)$) has been used in \cite{FigGig} in order to study gradient flows with Dirichlet boundary conditions.
  \end{remark}

\begin{remark}  We also observe that in this framework the convexity of $\Omega$ is no longer needed to guarantee the equivalence between {\rm (BP)} and {\rm (KP)}. Indeed, $C^1$ functions vanishing on $\partial \Omega$ {\it are} $\Lip_1$, whatever the shape of $\Omega$! Equivalently, we can think that the transport rays $[x,T(x)]$ will never exit $\Omega$, from the fact that the target measure is on $\partial \Omega$ and is arbitrary: in case of multiple intersections of the segment $[x,T(x)]$ with the boundary, then $P_{\partial\Omega}(x)$ would coincide with the first one.
 \end{remark}

The question that we consider now is whether the transport density $\sigma$ from $f^+$ to $(P_{\partial\Omega})_\#f^+$ (or, equivalently, the optimal vector field $v$ in \eqref{beckbord}) is in  $L^p(\Omega)$ when $f^+ \in L^p(\Omega)$.
  We cannot use Proposition \ref{prop transp dens}, since in this case the target measure 
  $(P_{\partial\Omega})_{\#} f^+$ is concentrated on the boundary of $\Omega$ and hence is not $L^p$ itself. However, we will see that the same $L^p$ result will be true as well, via a technique which will be described in the next section.

 \section{$L^p$ estimates via symmetrization}\label{3}

In this section we will first develop some tools, based on a symmetrization argument, to show that the transport density $\sigma$ from $f^+$ to $(P_{\partial\Omega})_\#f^+$ is also the restriction of a transport density $\tilde\sigma$, which is associated with the transport from $f^+$ to another suitable density $f^-$, supported outside $\Omega$. Then, we will apply this fact so as to produce the desired $L^p$ estimates on $\sigma$.

\begin{figure} 
\begin{tikzpicture}[line cap=round,line join=round,>=triangle 45,x=1.0cm,y=1.0cm]
\clip(-4.22,-2.93) rectangle (5.14,4.27);
\draw [line width=2pt] (1.21,1.88)-- (-2.13,1.88);
\draw [line width=2pt] (-2.13,1.88)-- (-2.93,-1.17);
\draw [line width=2pt] (2.36,0.55)-- (1.21,1.88);
\draw (-2.13,1.88)-- (-1.24,0.72);
\draw [color=ffqqww] (-1.73,2.37)-- (-1.74,1.37);
\draw [color=ffqqww] (-1.51,2.7)-- (-1.54,1.12);
\draw [color=ffqqww] (-1.2,3.06)-- (-1.24,0.72);
\draw (-2.13,1.88)-- (-1.2,3.06);
\draw (1.21,1.88)-- (0.6,3.03);
\draw [dash pattern=on 2pt off 2pt] (-1.2,3.06)-- (-0.74,3.65);
\draw [dash pattern=on 2pt off 2pt] (0.25,3.65)-- (-0.74,3.65);
\draw [dash pattern=on 2pt off 2pt,color=ffqqww] (-0.24,3.65)-- (-0.26,0.15);

\draw [line width=2pt] (-2.89,-1.2)-- (-1.09,-2.5);
\draw [line width=2pt] (-1.09,-2.5)-- (0.91,-2.5);
\draw [line width=2pt] (0.91,-2.5)-- (1.74,-1.99);

\draw [line width=2pt] (2.36,0.55)-- (1.74,-1.99);
\draw [dash pattern=on 2pt off 2pt] (0.6,3.03)-- (0.25,3.65);
\draw (-0.78,0.13)-- (0.22,0.12);
\draw [color=ffqqww] (0.94,1.39)-- (0.94,2.43);
\draw [color=ffqqww] (0.8,2.74)-- (0.79,1.13);
\draw [color=ffqqww] (0.6,3.03)-- (0.58,0.77);
\draw [dash pattern=on 2pt off 2pt,color=ffqqww] (0.23,3.65)-- (0.22,0.12);
\draw [dash pattern=on 2pt off 2pt,color=ffqqww] (-0.76,3.62)-- (-0.78,0.13);
\draw [dash pattern=on 2pt off 2pt] (-0.78,0.13)-- (-1.24,0.72);
\draw [dash pattern=on 2pt off 2pt] (0.22,0.12)-- (0.58,0.77);
\draw (1.21,1.88)-- (0.58,0.77);
\begin{scriptsize}
\fill [color=black] (1.21,1.88) circle (1.5pt);
\fill [color=ttfftt] (-1.24,0.72) circle (1.5pt);
\fill [color=ttfftt] (-1.74,1.37) circle (1.5pt);
\fill [color=ttfftt] (-1.54,1.12) circle (1.5pt);
\fill [color=ffffww] (-1.2,3.06) circle (1.5pt);
\fill [color=ffffww] (-1.51,2.7) circle (1.5pt);
\fill [color=ffffww] (-1.73,2.37) circle (1.5pt);
\fill [color=fffftt] (0.6,3.03) circle (1.5pt);
\fill [color=fffftt] (0.8,2.74) circle (1.5pt);
\fill [color=fffftt] (0.94,2.43) circle (1.5pt);
\fill [color=black] (-2.13,1.88) circle (1.5pt);
\fill [color=wwffqq] (0.94,1.39) circle (1.5pt);
\fill [color=wwffqq] (0.58,0.77) circle (1.5pt);
\fill [color=wwffqq] (0.79,1.13) circle (1.5pt);
\draw[color=black] (0,-0.3) node {{\large$\Omega_i$}};
\draw[color=black] (0,4) node {{\large$R(\Omega_i)$}};
\draw[color=black] (-1,2.2) node {{\large$F_i$}};
\end{scriptsize}
\end{tikzpicture}
\caption{ \label{fig1}} 
\end{figure}
We will start by supposing that $\Omega$ is a convex polyhedron with $n$ faces $F_i$ ($i=1,...,n$), and denote by $\Omega_i$ the set of points whose projection onto $\partial\Omega$ lies in $F_i$:
$$\Omega_i=\left\{x\in\Omega\,:\,d(x,\partial\Omega)=d(x,F_i)\right\}.$$
We can write $\Omega=\bigcup_i\Omega_i$, and the union is almost disjoint (we have $|\Omega_i\cap\Omega_j|=0$ for all $i\neq j$).
 Let $R$ be the map obtained by reflecting with respect to the boundary each subdomain $\Omega_i$. More precisely, for all $x \in \Omega_i\setminus\bigcup_{j\neq i}\Omega_j$, the point $R(x)$ is the reflexion of $x$ with respect to $F_i$ (see Figure \ref{fig1}). In this way $R$ is well-defined for a.e. $x\in\Omega$.
 
Suppose that $f^+ \in L^p(\Omega)$ and set $f^-=R_\# f^+$. It is clear that $f^-$ is an absolutely continous measure, with density given by $f^-(Ry):=f^+(y)\;\;\mbox{for all } y \in \Omega$. Let $\widetilde\Omega $ be any large compact convex set containing $\Omega \cup R(\Omega).$
We observe that $f^- \in L^p(\widetilde\Omega )$
 and $ || f^-||_{L^p}=|| f^+ ||_{L^p}.$ 
 
We are now interested in the following fact concerning the corresponding transport density. We will denote by $\sigma(f^+,f^-)$ the transport density from $f^+$ to $f^-$ (which is unique and belongs to $L^1$ as soon as $f^+\ll\lcal^d$, which will always be the case in our discussion).
  \begin{proposition}\label{convcase}
  Suppose that $\Omega$ is a polyhedron. Take $f^+\ll\lcal^d\res\Omega$ and define $f^-$ as above through $f^-=R_\# f^+$. Then
  $$(\sigma(f^+,f^-))\res\Omega=\sigma(f^+,(P_{\partial\Omega})_\#f^+).$$
Moreover, if $f^+\in L^p(\Omega)$, then the transport density between $f^+$ and $(P_{\partial\Omega})_\#f^+$ is in $L^p(\Omega)$ .
  \end{proposition}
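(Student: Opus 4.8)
The plan is to derive the whole statement from Proposition~\ref{prop transp dens} applied on the large convex set $\widetilde\Omega$, once we know that the transport density there restricts on $\Omega$ to the one we want. So the real content is the identity $(\sigma(f^+,f^-))\res\Omega=\sigma(f^+,(P_{\partial\Omega})_\#f^+)$; granted this, note that $f^-=R_\#f^+\in L^p(\widetilde\Omega)$: on each $\Omega_i\setminus\bigcup_{j\neq i}\Omega_j$ the map $R$ coincides with the affine reflection $R_i$ across the hyperplane $H_i\supset F_i$, which is an isometry with $|\det DR_i|=1$, so a change of variables gives $f^-(y)=\sum_{i:\,y\in R_i(\Omega_i)}f^+(R_i^{-1}y)$, a sum of at most $n$ reflected copies of pieces of $f^+$, whence $\|f^-\|_{L^p}\le n\|f^+\|_{L^p}$ (with equality when the sets $R_i(\Omega_i)$ are essentially disjoint, as happens for many $\Omega$). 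Then Proposition~\ref{prop transp dens} with $f^+,f^-\in L^p(\widetilde\Omega)$ yields $\sigma(f^+,f^-)\in L^p(\widetilde\Omega)$, hence the $L^p$ bound on $\Omega$.

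To prove the identity, I would first exhibit a common potential. Write $H_i=\{x:\langle x,n_i\rangle=c_i\}$ with $n_i$ the outer unit normal of $F_i$, set $\ell_i(x)=c_i-\langle x,n_i\rangle$ and $w:=\min_i\ell_i$. Then $w$ is $1$-Lipschitz on $\R^d$, vanishes on $\partial\Omega$, and equals $d(\cdot,\partial\Omega)$ on $\Omega$ — for a convex polyhedron the nearest boundary point to an interior $x$ is the orthogonal projection onto the nearest $H_i$, and that projection lies in $F_i$ (otherwise the segment realizing $d(x,H_i)$ would cross some $F_j$, contradicting $d(x,H_j)\ge d(x,H_i)$). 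Hence, by the definition of $\Omega_i$, for a.e.\ $x\in\Omega_i$ one has $P_{\partial\Omega}(x)=P_{H_i}(x)$, so $R(x)=R_i(x)=2P_{\partial\Omega}(x)-x$ and $|x-R(x)|=2\,d(x,\partial\Omega)$.

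Next I would check that two explicit plans are optimal. For the transport from $f^+$ to $(P_{\partial\Omega})_\#f^+$, the plan $(\id,P_{\partial\Omega})_\#f^+$ is optimal: its cost is $\int d(x,\partial\Omega)\,\dd f^+$, while $w|_\Omega$ is admissible in the Dirichlet dual and, vanishing on $\spt((P_{\partial\Omega})_\#f^+)\subset\partial\Omega$, realizes the same value $\int w\,\dd f^+$. For the transport from $f^+$ to $f^-$ on $\widetilde\Omega$, the plan $\gamma=(\id,R)_\#f^+$ is optimal with Kantorovich potential $w$: one has $w(R(x))=-w(x)$ for a.e.\ $x\in\Omega$, which is the elementary computation $\ell_j(R_ix)=\ell_j(x)-2\ell_i(x)\langle n_i,n_j\rangle\ge-\ell_i(x)=\ell_i(R_ix)$, valid for $x\in\Omega_i$ because then $0\le\ell_i(x)\le\ell_j(x)$ and $\langle n_i,n_j\rangle\le1$, so $w(R_ix)=\min_j\ell_j(R_ix)=-\ell_i(x)=-w(x)$; therefore $\int w\,\dd(f^+-f^-)=\int(w(x)-w(R(x)))\,\dd f^+=\int 2\,d(x,\partial\Omega)\,\dd f^+=\int|x-y|\,\dd\gamma$, so primal and dual values coincide and $\gamma$, $w$ are both optimal.

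Finally I would compare the transport densities via formula~\eqref{sigmaA}. Using the two optimal plans above, for every Borel $A\subset\Omega$ we have $\sigma(f^+,f^-)(A)=\int_\Omega\haus^1(A\cap[x,R(x)])\,\dd f^+$ and $\sigma(f^+,(P_{\partial\Omega})_\#f^+)(A)=\int_\Omega\haus^1(A\cap[x,P_{\partial\Omega}(x)])\,\dd f^+$. Since the midpoint of $[x,R(x)]$ is $P_{\partial\Omega}(x)\in\partial\Omega$ and a segment from an interior point to an exterior point of the convex set $\Omega$ meets $\partial\Omega$ exactly once, we get $[x,R(x)]\cap\Omega=[x,P_{\partial\Omega}(x)]$, so $\haus^1(A\cap[x,R(x)])=\haus^1(A\cap[x,P_{\partial\Omega}(x)])$ for $A\subset\Omega$; integrating gives $(\sigma(f^+,f^-))\res\Omega=\sigma(f^+,(P_{\partial\Omega})_\#f^+)$, which combined with the first paragraph finishes the proof. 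The main obstacle is the geometric step $w\circ R=-w$ on $\Omega$ — equivalently, that for $x\in\Omega_i$ the orthogonal projection on $H_i$ already lands in $F_i$ and stays the nearest boundary point after reflection — which is exactly where convexity of the polyhedron is used; everything else is a soft consequence of linear-programming duality and the integral-geometric formula for $\sigma$.
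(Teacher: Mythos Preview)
Your proof is correct and follows essentially the same route as the paper: exhibit a Kantorovich potential to certify that $R$ is optimal from $f^+$ to $f^-$, observe $[x,R(x)]\cap\Omega=[x,P_{\partial\Omega}(x)]$, and conclude via \eqref{sigmaA} and Proposition~\ref{prop transp dens}. The paper uses the signed distance $u(x)=\pm d(x,\partial\Omega)$ as potential rather than your $w=\min_i\ell_i$ (they agree on $\Omega$ but may differ outside, near vertices), and your bound $\|f^-\|_{L^p}\le n\|f^+\|_{L^p}$ is in fact more careful than the paper's claimed equality, which tacitly assumes the reflected pieces $R_i(\Omega_i)$ do not overlap.
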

\begin{proof}
First, we will show that $R$ is an optimal transport map from $f^+$ to $f^-$.
Set \begin{equation*}
u(x)=
\begin{cases}
\phantom{-}\mbox{d}(x,\partial\Omega) & \text{if $x \in \Omega$} \\
-\mbox{d}(x,\partial\Omega) & \text{else}
\end{cases}
\end{equation*} 
From $|x-R(x)|=2 |x-P_{\partial\Omega}(x)|,$ we have 
$$\int_{\Omega}|x-R(x)|\mathrm{d}f^+(x)=2 \int_{\Omega} |x-P_{\partial\Omega}(x)|\mathrm{d}f^+(x).$$ 
 On the other hand, $u$ is 1-Lip and
 \begin{eqnarray*}
 \int_{\Omega}u(x)\mathrm{d}(f^+ - f^-)(x)&=&\int_{\Omega} |x-P_{\partial\Omega}(x)|f^+(x)\mathrm{d}x+\int_{\Omega} |R(x)-P_{\partial\Omega}(R(x))|f^+(x)\mathrm{d}x\\
 &=&2 \int_{\Omega} |x-P_{\partial\Omega}(x)|\mathrm{d}f^+(x).
 \end{eqnarray*}
 Consequently, $R$ is an optimal transport map between $f^+$ and $f^-$ and $u$ is a Kantorovich potential.
 
We observe that the segment $[x,R(x)]$ intersects $\partial\Omega$ at the point $P_{\partial\Omega}(x)$ and that we have
  $$[x,R(x)]\cap\Omega=[x,P_{\partial\Omega}(x)].$$
But the map $x\mapsto P_{\partial\Omega}(x)$ is of course optimal in the transport from  $f^+$ to $( P_{\partial\Omega})_\#f^+$. Hence, using \eqref{sigmaA} we immediately get
   $$(\sigma(f^+,f^-))\res\Omega=\sigma(f^+,(P_{\partial\Omega})_\#f^+)$$
and we conclude by using Proposition \ref{prop transp dens}.
   \end{proof}

 Now, we will give a more general construction, inspired from the previous one, which will allow to deal with the case of a domain with an exterior ball condition.

Suppose that the boundary of $\;\Omega\;$ is a union of a finite number of parts of sphere of radius $r$. We will call the domains with these property {\it round polyhedra} (see Figure \ref{Fig. 2}).
 Set again
 $$\;\Omega_i:=\{x\in\;\Omega:\;P_{\partial\Omega}(x) \in F_i\}$$
  where $F_i\subset \partial B(b_i,r)$ is the $i$th part in the boundary of $\Omega$, contained in a sphere centered at $b_i$. More precisely, we suppose that $B:=\bigcup_i B(b_i,r)$ disconnects $\R^d$ and that $\Omega$ is equal to the union of all the bounded connected components of $\R^d\setminus B$.  
  \begin{figure}
\begin{tikzpicture}[line cap=round,line join=round,>=triangle 45,x=1.0cm,y=1.0cm]
\clip(-6.4,-4.2) rectangle (7.06,6.5);
\draw [shift={(-2.81,3.36)}] plot[domain=4.84:5.52,variable=\t]({1*1.93*cos(\t r)+0*1.93*sin(\t r)},{0*1.93*cos(\t r)+1*1.93*sin(\t r)});
\draw [shift={(0,6)}] plot[domain=4.37:5.16,variable=\t]({1*4.2*cos(\t r)+0*4.2*sin(\t r)},{0*4.2*cos(\t r)+1*4.2*sin(\t r)});
\draw [shift={(4.16,3.27)}] plot[domain=3.55:4.66,variable=\t]({1*2.56*cos(\t r)+0*2.56*sin(\t r)},{0*2.56*cos(\t r)+1*2.56*sin(\t r)});
\draw (-1.43,2.05)-- (-1.01,0.29);
\draw [color=ffqqtt] (-0.62,2.49)-- (-1.01,0.29);
\draw [color=ffqqtt] (-0.77,2.39)-- (-1.12,0.76);
\draw [color=ffqqtt] (-0.98,2.29)-- (-1.25,1.3);
\draw [color=ffqqtt] (-1.19,2.17)-- (-1.34,1.69);
\draw [color=ffqqtt] (1.05,2.48)-- (1.49,0.97);
\draw [color=ffqqtt] (1.52,2.31)-- (1.71,1.84);
\draw [shift={(-5.32,-1.08)}] plot[domain=-0.28:0.62,variable=\t]({1*2.33*cos(\t r)+0*2.33*sin(\t r)},{0*2.33*cos(\t r)+1*2.33*sin(\t r)});
\draw [shift={(-4.12,-3.66)},dash pattern=on 3pt off 3pt]  plot[domain=0.11:1.08,variable=\t]({1*2.19*cos(\t r)+0*2.19*sin(\t r)},{0*2.19*cos(\t r)+1*2.19*sin(\t r)});
\draw [shift={(1.07,-7.71)},dash pattern=on 3pt off 3pt]  plot[domain=1.45:2.17,variable=\t]({1*5.22*cos(\t r)+0*5.22*sin(\t r)},{0*5.22*cos(\t r)+1*5.22*sin(\t r)});
\draw [shift={(4.19,-2.91)},dash pattern=on 3pt off 3pt]  plot[domain=2.22:2.99,variable=\t]({1*2.52*cos(\t r)+0*2.52*sin(\t r)},{0*2.52*cos(\t r)+1*2.52*sin(\t r)});
\draw [color=ffqqtt] (1.26,2.4)-- (1.6,1.38);
\draw [dash pattern=on 3pt off 3pt] (-0.84,-0.26)-- (-1.01,0.29);
\draw (1.81,2.25)-- (1.49,0.97);
\draw (1.49,0.97)-- (1.39,0.57);
\draw [dash pattern=on 3pt off 3pt] (1.39,0.57)-- (1.17,-0.31);
\draw [color=ffqqtt] (-0.59,-0.31)-- (-0.31,2.61);
\draw [color=ffqqtt] (-0.21,-0.35)-- (-0.11,2.65);
\draw [color=ffqqtt] (0.18,-0.42)-- (0.1,2.64);
\draw [color=ffqqtt] (1.22,-0.13)-- (0.68,2.61);
\draw [color=ffqqtt] (-0.49,2.56)-- (-0.88,-0.13);
\draw [color=ffqqtt] (0.47,2.65)-- (0.89,-0.33);
\draw [color=ffqqtt] (0.27,2.64)-- (0.51,-0.37);
\draw [color=ffqqtt] (0.9,2.51)-- (1.39,0.57);
\draw [shift={(4.51,-1.07)}] plot[domain=1.84:3.06,variable=\t]({1*1.85*cos(\t r)+0*1.85*sin(\t r)},{0*1.85*cos(\t r)+1*1.85*sin(\t r)});
\draw [shift={(-3.88,1.51)}] plot[domain=5.08:6.24,variable=\t]({1*1.33*cos(\t r)+0*1.33*sin(\t r)},{0*1.33*cos(\t r)+1*1.33*sin(\t r)});
\draw [dotted,color=ffqqtt] (-0.59,-0.31)-- (-0.64,-0.84);
\draw [dotted,color=ffqqtt] (-0.21,-0.33)-- (-0.24,-0.91);
\draw [dotted,color=ffqqtt] (0.89,-0.33)-- (0.96,-0.89);
\draw [dotted,color=ffqqtt] (0.18,-0.42)-- (0.19,-0.8);
\draw [dotted,color=ffqqtt] (0.51,-0.37)-- (0.56,-0.94);
\begin{scriptsize}
\fill [color=black] (-1.43,2.05) circle (1.5pt);
\fill [color=black] (1.81,2.25) circle (1.5pt);
\fill [color=qqfftt] (-1.01,0.29) circle (1.5pt);
\fill [color=qqfftt] (-1.25,1.3) circle (1.5pt);
\fill [color=qqfftt] (-1.34,1.69) circle (1.5pt);
\fill [color=qqfftt] (-1.12,0.76) circle (1.5pt);
\fill [color=qqfftt] (1.49,0.97) circle (1.5pt);
\fill [color=ffffqq] (-0.62,2.49) circle (1.5pt);
\fill [color=ffffqq] (-0.77,2.39) circle (1.5pt);
\fill [color=ffffqq] (-0.98,2.29) circle (1.5pt);
\fill [color=ffffqq] (-1.19,2.17) circle (1.5pt);
\fill [color=fffftt] (1.05,2.48) circle (1.5pt);
\fill [color=ffffqq] (1.52,2.31) circle (1.5pt);
\fill [color=qqqqff] (1.07,-7.71) circle (1.5pt);
\fill [color=ffffqq] (1.26,2.4) circle (1.5pt);
\fill [color=ffqqtt] (0,6) circle (1.5pt);
\draw[color=ffqqtt] (0.28,6.26) node {{\Large$b_i$}};
\fill [color=qqfftt] (1.6,1.38) circle (1.5pt);
\fill [color=qqfftt] (1.39,0.57) circle (1.5pt);
\fill [color=ffffqq] (-0.31,2.61) circle (1.5pt);
\fill [color=ffffqq] (-0.11,2.65) circle (1.5pt);
\fill [color=ffffqq] (0.1,2.64) circle (1.5pt);
\fill [color=ffffqq] (0.68,2.61) circle (1.5pt);
\fill [color=ffffqq] (-0.49,2.56) circle (1.5pt);
\fill [color=ffffqq] (0.27,2.64) circle (1.5pt);
\fill [color=ffffqq] (0.47,2.65) circle (1.5pt);
\fill [color=fffftt] (0.9,2.51) circle (1.5pt);
\fill [color=ttfftt] (1.71,1.84) circle (1.5pt);
\draw[color=black] (0.05,3.3) node {{\Large$T(\Omega_i)$}};
\draw[color=black] (0,-0.04) node {{\Large$\Omega_i$}};
\draw[color=black] (0,1.5) node {{\Large$F_i$}};
\end{scriptsize}
\end{tikzpicture}
\caption{\label{Fig. 2}}
\end{figure}

 We define
  $$T(x):=b_i+\left(r-\frac{|x-b_i|-r}{L}\;\frac{r}{2}\right)\frac{x-b_i}{|x-b_i|}\;\;\;\mbox{for all}\;\;x \in \Omega_i$$
  where $L:=  \mbox{diam} (\Omega)$ and $b_i$ is the center of the sphere corresponding to $F_i$. Again we choose a large domain $\widetilde\Omega $ containing $\Omega \cup T(\Omega).$
  \begin{proposition} \label{Prop3.2}
Suppose that $f^+ \in L^p(\Omega)$ and  set $f^-:=T_{\#}f^+$, then $ f^- \in L^p(\widetilde\Omega )$ with $|| f^-||_{L^p}\leq C || f^+ ||_{L^p},$ where the constant $C$ only depends on $d,r$ and $L$.
  \end{proposition}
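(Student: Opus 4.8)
The plan is to show that the map $T$ defined on each $\Omega_i$ is bi-Lipschitz onto its image with a Jacobian bounded below, uniformly in $i$, with constants depending only on $d$, $r$ and $L$. Once we know this, the push-forward $f^-=T_\#f^+$ has density $f^-(T(x))=f^+(x)/|\det DT(x)|$, so a lower bound $|\det DT(x)|\geq c>0$ together with the fact that the images $T(\Omega_i)$ are essentially disjoint (this also needs to be checked) immediately gives $\|f^-\|_{L^p(\widetilde\Omega)}\leq c^{-1/p'}\|f^+\|_{L^p(\Omega)}$ by a change of variables. So the whole statement reduces to a Jacobian estimate for the explicit radial map $T$.

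First I would fix a face $F_i\subset\partial B(b_i,r)$ and, without loss of generality, place $b_i$ at the origin, writing $x=\rho\theta$ with $\rho=|x|\in[r,r+L]$ (since $x\in\Omega_i\subset\Omega$ and $\diam(\Omega)=L$, while $x$ is outside $B(b_i,r)$, so indeed $r\le\rho\le r+L$) and $\theta\in S^{d-1}$. Then $T(x)=g(\rho)\theta$ with $g(\rho)=r-\frac{\rho-r}{L}\cdot\frac{r}{2}$. In spherical coordinates a radial map $x=\rho\theta\mapsto g(\rho)\theta$ has Jacobian determinant $g'(\rho)\left(\frac{g(\rho)}{\rho}\right)^{d-1}$ (up to sign), so I would just estimate the two factors. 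We have $g'(\rho)=-\frac{r}{2L}$, which is a nonzero constant, so $|g'|=\frac{r}{2L}$. For the other factor, note $g(\rho)$ ranges over $[\,r/2,\,r\,]$ as $\rho$ ranges over $[r,r+L]$, and $\rho\le r+L$, hence $\frac{g(\rho)}{\rho}\geq \frac{r/2}{r+L}>0$; therefore $|\det DT(x)|\geq \frac{r}{2L}\left(\frac{r}{2(r+L)}\right)^{d-1}=:c_0>0$, a constant depending only on $d,r,L$. This is the main quantitative content.

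Next I would address the overlap of the images: since $T$ maps $\Omega_i$ into the spherical shell $\{r/2\le|x-b_i|\le r\}\subset B(b_i,r)$, and each such ball is part of the exterior-ball family whose union $B$ disconnects $\R^d$ with $\Omega$ lying in the bounded components of $\R^d\setminus B$, the images $T(\Omega_i)$ sit inside the balls $B(b_i,r)$ which, for the finitely many faces of a round polyhedron, have pairwise negligible overlap (they meet only along lower-dimensional sets, since two such spheres of radius $r$ bounding the same domain intersect in a set of dimension $\le d-2$); in any case one can take $\widetilde\Omega$ large enough and write, for any Borel test function $\phi\ge 0$,
\[
\int \phi\,\dd f^- = \sum_i\int_{\Omega_i}\phi(T(x))\,f^+(x)\,\dd x
= \sum_i\int_{T(\Omega_i)}\phi(y)\,\frac{f^+(T^{-1}(y))}{|\det DT(T^{-1}(y))|}\,\dd y,
\]
so that the density of $f^-$ at a.e.\ point $y$ is a sum of at most finitely many (in fact, by the a.e.-disjointness, essentially one) terms each bounded by $c_0^{-1}f^+(T^{-1}(y))$. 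Applying this with $\phi=|f^-|^{p-1}\mathbf 1_A$, or more directly estimating $\|f^-\|_{L^p}^p=\int |f^-(y)|^{p-1}\dd f^-(y)$, and using $\|f^-\|_{L^1}=\|f^+\|_{L^1}$ together with the pointwise bound, gives $\|f^-\|_{L^p(\widetilde\Omega)}\le C\|f^+\|_{L^p(\Omega)}$ with $C=C(d,r,L)$.

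**The main obstacle** I anticipate is not the Jacobian computation itself, which is elementary, but making the overlap/change-of-variables argument clean: one must verify that $T$ restricted to each $\Omega_i$ is injective (clear, since $g$ is strictly monotone and the map preserves directions from $b_i$) and that the finitely many images $T(\Omega_i)$ overlap on a Lebesgue-null set, or else settle for a bound with an extra harmless factor equal to the (finite) maximal number of overlapping pieces. I would also take care that the range of $\rho$ is genuinely $[r,r+L]$: a point $x\in\Omega_i$ has $d(x,\partial\Omega)=d(x,F_i)=|x-b_i|-r$ and this distance is at most $\diam(\Omega)=L$, which is exactly what is used to bound $|x-b_i|\le r+L$ and hence to keep $g(\rho)/\rho$ bounded below.
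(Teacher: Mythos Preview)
Your approach and the paper's are the same: both bound $|\det DT|$ from below via a radial computation (you in spherical coordinates, the paper via the eigenvalues of $DT$), obtaining the identical constant $\tfrac{r}{2L}\bigl(\tfrac{r}{2(r+L)}\bigr)^{d-1}$, and then run the change of variables $\int|f^-|^p\,\mathrm{d}y=\int|f^-(T(x))|^{p-1}\,\mathrm{d}f^+=\int f^+(x)^p/J(x)^{p-1}\,\mathrm{d}x$.

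The one flaw is your disjointness argument for the images $T(\Omega_i)$. Two spheres $\partial B(b_i,r)$, $\partial B(b_j,r)$ meeting in dimension $\le d-2$ means precisely that the open balls overlap in a lens of \emph{positive} measure, not a null set, so your reasoning is backwards; and your fallback of a finite overlap factor makes $C$ depend on the number of faces, which ruins the subsequent approximation by round polyhedra with more and more faces. The correct fix (which the paper also omits) is that $T(\Omega_i)$ stays in the Voronoi cell of $b_i$ among the centers $\{b_j\}$. First, $x\in\Omega_i$ forces $|x-b_i|\le|x-b_j|$ for all $j$: since $d(x,\partial\Omega)=|x-b_i|-r$, and the segment $[x,b_j]$ must leave the open set $\Omega$ at some $p\in\partial\Omega$ before reaching $\partial B(b_j,r)$, one gets $|x-b_i|-r\le|x-p|\le|x-b_j|-r$. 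Then for $y=T(x)\in[x,b_i]$ the triangle inequality gives $|y-b_j|\ge|x-b_j|-|x-y|\ge|x-b_i|-|x-y|=|y-b_i|$. Hence the images lie in distinct Voronoi cells and overlap only on hyperplanes, $T$ is a.e.\ globally injective, and the formula $f^-(T(x))=f^+(x)/J(x)$ that both you and the paper use is justified.
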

  \begin{proof}
  Compute the Jacobian of the map $T$: on $\Omega_i$, we have
  $$DT(x)=\frac{r}{2L}\left( -I+\frac{r+2L}{|x-b_i|}\big(I-e(x)\otimes e(x)\big)\right),$$
  where $e(x):=(x-b_i)/|x-b_i|$.  It is easy to see that $DT(x)$ is a symmetric matrix with one eigenvalue equal to $-\frac{r}{2L}$ and $d-1$ eigenvalues equal to 
  $$\lambda(x):=\frac{r}{2L}\frac{r+2L-|x-b_i|}{|x-b_i|}=\frac{r}{2L}\left(\frac{r+2L}{|x-b_i|}-1\right).$$
  Using $|x-b_i|\leq r+L$, we get 
  $$ \lambda(x) \geq \frac{r}{2(r+L)}.$$
  This provides, for $J:=|\det(DT)|$, the lower bound 
  $$ J(x)\geq\frac{r^d}{2^d(r+L)^{d-1}L}$$
  which is, by the way, independent of $i$ and of the number of spherical parts composing $\partial\Omega$.
  \\ \\
  From $f^-(T(x))=f^+(x)/J(x)$, we get
  $$\int |f^-(y)|^pdy=\int |f^-(y)|^{p-1}df^-=\int |f^-(T(x))|^{p-1}df^+=\int \frac{f^+(x)^p}{J(x)^{p-1}}dx\leq C\int f^+(x)^pdx,$$
  where $C:=(\inf J(x))^{1-p}$.
  By raising to power $1/p$, this provides 
  $$||f^-||_{L^p}\leq C(r,L,d)^{1/p-1}||f^+||_{L^p}$$
  and the constant can be taken independent of $p$. In particular, the estimate is also valid for $p=\infty$.   \end{proof}
 \begin{proposition}\label{PolyhedraL}
  Suppose that $\Omega$ is a round polyhedron. Take $f^+\ll\lcal^d\res\Omega$ and define $f^-$ as above through $f^-=T_\# f^+$. Then
  $$(\sigma(f^+,f^-))\res\Omega=\sigma(f^+,(P_{\partial\Omega})_\#f^+).$$
Moreover, if $f^+\in L^p(\Omega)$, then the transport density between $f^+$ and $(P_{\partial\Omega})_\#f^+$ is in $L^p(\Omega)$.
  \end{proposition}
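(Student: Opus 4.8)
The plan is to follow \emph{verbatim} the scheme of the proof of Proposition~\ref{convcase}, with the map $T$ in the role of the reflection $R$ and with the signed distance to $\partial\Omega$ as Kantorovich potential. Concretely, I would (i) check that $T$ is an optimal transport map from $f^+$ to $f^-=T_\#f^+$; (ii) observe that the transport segment $[x,T(x)]$ leaves $\Omega$ exactly through $P_{\partial\Omega}(x)$; (iii) read the identity for $\sigma$ off formula~\eqref{sigmaA}; and (iv) deduce the $L^p$ bound from Proposition~\ref{Prop3.2} and Proposition~\ref{prop transp dens}. The only genuinely new input, compared with the polyhedral case, is a short lemma of spherical geometry, and that is the step where the care is needed.

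\emph{Geometry of $T$ and the lemma.} For a.e.\ $x\in\Omega_i$ one has $x\notin B(b_i,r)$, so $P_{\partial\Omega}(x)=b_i+r\,\tfrac{x-b_i}{|x-b_i|}\in F_i$, $d(x,\partial\Omega)=|x-b_i|-r$, while by definition $T(x)=b_i+\rho(x)\,\tfrac{x-b_i}{|x-b_i|}$ with $\rho(x)=r-\tfrac{r}{2L}\,d(x,\partial\Omega)\in[\tfrac{r}{2},r)$. Thus $b_i,T(x),P_{\partial\Omega}(x),x$ are aligned in this order, $[x,T(x)]$ meets $\partial B(b_i,r)$ only at $P_{\partial\Omega}(x)$, and
$$|x-T(x)|=\big(|x-b_i|-r\big)+\big(r-\rho(x)\big)=d(x,\partial\Omega)+\tfrac{r}{2L}\,d(x,\partial\Omega).$$
The lemma to establish is that $d(T(x),\partial\Omega)=r-\rho(x)$. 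The inequality ``$\le$'' is immediate since $P_{\partial\Omega}(x)\in\partial\Omega$ and $|T(x)-P_{\partial\Omega}(x)|=r-\rho(x)$; for ``$\ge$'', note that $\Omega\cap B=\emptyset$ forces $\partial\Omega\subset\partial B$, so every $q\in\partial\Omega$ satisfies $q\notin B(b_i,r)$, i.e.\ $|q-b_i|\ge r$, whence $|T(x)-q|\ge|q-b_i|-|T(x)-b_i|\ge r-\rho(x)$. This is the round-face analogue of the (tacit) fact used in Proposition~\ref{convcase} that the nearest boundary point to $R(x)$ is again $P_{\partial\Omega}(x)$; elementary as it is, I expect it to be the crux, as it is the one place where the spherical structure of the faces really enters.

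\emph{Optimality of $T$ and formula for $\sigma$.} Put $u:=d(\cdot,\partial\Omega)$ on $\Omega$ and $u:=-d(\cdot,\partial\Omega)$ on $\widetilde\Omega\setminus\Omega$; being the signed distance to a closed set, $u\in\Lip_1(\widetilde\Omega)$. By the displayed identity and the lemma, $u(x)-u(T(x))=d(x,\partial\Omega)+d(T(x),\partial\Omega)=|x-T(x)|$ for $f^+$-a.e.\ $x$, so
$$\int_{\widetilde\Omega}u\,\dd(f^+-f^-)=\int_\Omega\big(u(x)-u(T(x))\big)\,\dd f^+(x)=\int_\Omega|x-T(x)|\,\dd f^+(x).$$
By the primal--dual inequality recalled in Section~\ref{2}, this shows at once that $\gamma:=(\id,T)_\#f^+$ is optimal for the transport from $f^+$ to $f^-$ and that $u$ is a Kantorovich potential. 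Since $f^+\ll\lcal^d$, Proposition~\ref{prop transp dens} tells us that $\sigma(f^+,f^-)$ is the unique transport density, hence is given by \eqref{sigmaA} applied to this $\gamma$.

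\emph{Conclusion.} Since $[x,T(x)]$ crosses $\partial\Omega$ at $P_{\partial\Omega}(x)$ and then enters $B(b_i,r)\subset\R^d\setminus\overline\Omega$, we get $[x,T(x)]\cap\overline\Omega=[x,P_{\partial\Omega}(x)]$; inserting this into \eqref{sigmaA} and testing against Borel sets $A\subset\Omega$,
$$\sigma(f^+,f^-)(A)=\int\haus^1\big(A\cap[x,P_{\partial\Omega}(x)]\big)\,\dd f^+(x)=\sigma\big(f^+,(P_{\partial\Omega})_\#f^+\big)(A),$$
the last equality because $x\mapsto P_{\partial\Omega}(x)$ is an optimal map between $f^+$ and its push-forward. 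This is the asserted identity $(\sigma(f^+,f^-))\res\Omega=\sigma(f^+,(P_{\partial\Omega})_\#f^+)$. For the summability claim, if $f^+\in L^p(\Omega)$ then Proposition~\ref{Prop3.2} gives $f^-\in L^p(\widetilde\Omega)$ with norm controlled by that of $f^+$; hence $f^+,f^-\in L^p(\widetilde\Omega)$, Proposition~\ref{prop transp dens} gives $\sigma(f^+,f^-)\in L^p(\widetilde\Omega)$, and restricting to $\Omega$ yields $\sigma(f^+,(P_{\partial\Omega})_\#f^+)\in L^p(\Omega)$, with the same constant as in Proposition~\ref{Prop3.2} (in particular uniform in $p$, so the case $p=\infty$ is included).
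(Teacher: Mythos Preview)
Your proof is correct and follows the same three-step scheme as the paper: exhibit a Kantorovich potential to certify that $T$ is optimal, identify $[x,T(x)]\cap\Omega$ with $[x,P_{\partial\Omega}(x)]$, then read off the restriction identity from~\eqref{sigmaA} and conclude via Propositions~\ref{Prop3.2} and~\ref{prop transp dens}. The one genuine difference is the choice of potential. You keep the signed distance to $\partial\Omega$ used in Proposition~\ref{convcase}, which obliges you to prove the lemma $d(T(x),\partial\Omega)=r-\rho(x)$; the paper instead takes $u(x)=\min_i|x-b_i|$, for which the key equality $u(x)-u(T(x))=|x-T(x)|$ follows directly from the collinearity of $b_i,T(x),x$ (one has $u(x)-u(T(x))\ge |x-b_i|-|T(x)-b_i|=|x-T(x)|$, and the reverse inequality is the $1$-Lipschitz bound). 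On $\spt f^+\cup\spt f^-$ the two potentials in fact differ by the constant~$r$, so the distinction is cosmetic: the paper's choice is a touch shorter and better adapted to the spherical geometry, while yours has the merit of paralleling the flat case verbatim.
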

  \begin{proof}
  The proof will follow the same lines of Proposition \ref{convcase}.
  We will show again the optimality of $T$ for the transport of $f^+$ to $f^-$ by producing a Kantorovich potential. In this case, we set 
 $$u(x)=\min\limits_{i=1,..,n}|x-b_i|.$$
 The function $u$ is of course 1-Lip and we have 
\begin{eqnarray*}
\int_{\Omega}u(x)\mathrm{d}(f^+ - f^-)(x)&=&\int_{\Omega} u(x)f^+(x)\mathrm{d}x -\int_{\Omega} u(T(x))f^+(x)\mathrm{d}x\\
&=&\sum\limits_{i=1}^{n}\int_{\Omega_i} u(x)f^+(x)\mathrm{d}x -\int_{\Omega_i} u(T(x))f^+(x)\mathrm{d}x\\
&=&\sum\limits_{i=1}^{n}\int_{\Omega_i}\left(|x-b_i|-|T(x)-b_i|\right) f^+(x)\mathrm{d}x.
 \end{eqnarray*}
By definition of $T$, the points $b_i,x$ and $T(x)$ are aligned (with $T(x)\in [x,b_i]$), hence $|x-b_i|-|T(x)-b_i|=|x-T(x)|$ and
$$\int_{\Omega}u(x)\mathrm{d}(f^+ - f^-)(x)=\int_{\Omega}|x-T(x)|f^+(x)\mathrm{d}x.$$
 Consequently, $T$ is the optimal transport map between $f^+$ and $f^-$ and $u$ is the corresponding Kantorovich potential.
 
 Now we observe in this case as well that the segment $[x,T(x)]$ intersects $\partial\Omega$ at the point $P_{\partial\Omega}(x)$ and that we have
  $$ [x,T(x)]\cap\Omega=[x,P_{\partial\Omega}(x)].$$
   Hence, using \eqref{sigmaA} again we immediately get
   $$(\sigma(f^+,f^-))\res\Omega=\sigma(f^+,(P_{\partial\Omega})_\#f^+)$$
   and we conclude by Proposition \ref{prop transp dens}.
   \end{proof}
   
   \begin{remark}
   The reader can easily see that, both in Propositions \ref{convcase} and \ref{PolyhedraL}, the restriction property of the transport density $\sigma$ also holds for the vector field $v$.
   \end{remark}
   
    \begin{figure}
\begin{tikzpicture}[line cap=round,line join=round,>=triangle 45,x=1.0cm,y=1.0cm]
\clip(-5.42,-2.7) rectangle (5.41,4.01);
\draw [shift={(0.4,-2.89)},line width=1.2pt,color=ffqqtt]  plot[domain=1.86:2.27,variable=\t]({1*4.94*cos(\t r)+0*4.94*sin(\t r)},{0*4.94*cos(\t r)+1*4.94*sin(\t r)});
\draw [shift={(-0.18,0.58)},line width=1.2pt,color=ffqqtt]  plot[domain=3.02:3.95,variable=\t]({1*2.62*cos(\t r)+0*2.62*sin(\t r)},{0*2.62*cos(\t r)+1*2.62*sin(\t r)});
\draw [shift={(-1.49,2.19)},line width=1.2pt,color=ffqqtt]  plot[domain=4.57:5.33,variable=\t]({1*3.54*cos(\t r)+0*3.54*sin(\t r)},{0*3.54*cos(\t r)+1*3.54*sin(\t r)});
\draw [shift={(1.6,-1.81)},line width=1.2pt,color=ffqqtt]  plot[domain=0.88:2.33,variable=\t]({1*1.53*cos(\t r)+0*1.53*sin(\t r)},{0*1.53*cos(\t r)+1*1.53*sin(\t r)});
\draw [shift={(0.02,0.49)},line width=1.2pt,color=ffqqtt]  plot[domain=-0.42:0.8,variable=\t]({1*2.79*cos(\t r)+0*2.79*sin(\t r)},{0*2.79*cos(\t r)+1*2.79*sin(\t r)});
\draw [shift={(0.18,3.49)},line width=1.2pt,color=ffqqtt]  plot[domain=4.08:5.77,variable=\t]({1*2.03*cos(\t r)+0*2.03*sin(\t r)},{0*2.03*cos(\t r)+1*2.03*sin(\t r)});
\draw [shift={(-1.29,2.26)}] plot[domain=3.82:5.79,variable=\t]({1*0.46*cos(\t r)+0*0.46*sin(\t r)},{0*0.46*cos(\t r)+1*0.46*sin(\t r)});
\draw [shift={(-2.02,1.88)}] plot[domain=-2.3:0.24,variable=\t]({1*0.39*cos(\t r)+0*0.39*sin(\t r)},{0*0.39*cos(\t r)+1*0.39*sin(\t r)});
\draw [shift={(-2.65,1.5)}] plot[domain=-2.25:0.24,variable=\t]({1*0.38*cos(\t r)+0*0.38*sin(\t r)},{0*0.38*cos(\t r)+1*0.38*sin(\t r)});
\draw [shift={(-3.23,0.95)}] plot[domain=-1.17:0.64,variable=\t]({1*0.44*cos(\t r)+0*0.44*sin(\t r)},{0*0.44*cos(\t r)+1*0.44*sin(\t r)});
\draw [shift={(-3.2,0.15)}] plot[domain=-1.05:1.24,variable=\t]({1*0.41*cos(\t r)+0*0.41*sin(\t r)},{0*0.41*cos(\t r)+1*0.41*sin(\t r)});
\draw [shift={(-2.94,-0.58)}] plot[domain=-1.07:1.71,variable=\t]({1*0.38*cos(\t r)+0*0.38*sin(\t r)},{0*0.38*cos(\t r)+1*0.38*sin(\t r)});
\draw [shift={(-2.57,-1.2)}] plot[domain=-0.64:2.14,variable=\t]({1*0.34*cos(\t r)+0*0.34*sin(\t r)},{0*0.34*cos(\t r)+1*0.34*sin(\t r)});
\draw [shift={(-1.3,-1.77)}] plot[domain=0.56:3.01,variable=\t]({1*0.41*cos(\t r)+0*0.41*sin(\t r)},{0*0.41*cos(\t r)+1*0.41*sin(\t r)});
\draw [shift={(-0.49,-1.73)}] plot[domain=0.72:2.77,variable=\t]({1*0.5*cos(\t r)+0*0.5*sin(\t r)},{0*0.5*cos(\t r)+1*0.5*sin(\t r)});
\draw [shift={(0.35,-1.39)}] plot[domain=1.03:3.18,variable=\t]({1*0.47*cos(\t r)+0*0.47*sin(\t r)},{0*0.47*cos(\t r)+1*0.47*sin(\t r)});
\draw [shift={(1.03,-0.9)}] plot[domain=0.96:3.34,variable=\t]({1*0.45*cos(\t r)+0*0.45*sin(\t r)},{0*0.45*cos(\t r)+1*0.45*sin(\t r)});
\draw [shift={(1.63,-0.67)}] plot[domain=0.27:2.77,variable=\t]({1*0.37*cos(\t r)+0*0.37*sin(\t r)},{0*0.37*cos(\t r)+1*0.37*sin(\t r)});
\draw [shift={(3.2,-0.13)}] plot[domain=1.8:3.86,variable=\t]({1*0.45*cos(\t r)+0*0.45*sin(\t r)},{0*0.45*cos(\t r)+1*0.45*sin(\t r)});
\draw [shift={(3.2,0.67)}] plot[domain=2.03:4.46,variable=\t]({1*0.38*cos(\t r)+0*0.38*sin(\t r)},{0*0.38*cos(\t r)+1*0.38*sin(\t r)});
\draw [shift={(3.12,1.42)}] plot[domain=2.12:4.5,variable=\t]({1*0.42*cos(\t r)+0*0.42*sin(\t r)},{0*0.42*cos(\t r)+1*0.42*sin(\t r)});
\draw [shift={(0.89,1.95)}] plot[domain=-2.61:0.43,variable=\t]({1*0.32*cos(\t r)+0*0.32*sin(\t r)},{0*0.32*cos(\t r)+1*0.32*sin(\t r)});
\draw [shift={(0.22,1.87)}] plot[domain=3.48:6.1,variable=\t]({1*0.4*cos(\t r)+0*0.4*sin(\t r)},{0*0.4*cos(\t r)+1*0.4*sin(\t r)});
\draw [shift={(2.8,2.19)}] plot[domain=2.13:4.92,variable=\t]({1*0.43*cos(\t r)+0*0.43*sin(\t r)},{0*0.43*cos(\t r)+1*0.43*sin(\t r)});
\draw [shift={(2.25,3.05)}] plot[domain=3.85:5.29,variable=\t]({1*0.59*cos(\t r)+0*0.59*sin(\t r)},{0*0.59*cos(\t r)+1*0.59*sin(\t r)});
\draw [shift={(1.38,2.5)}] plot[domain=-2.03:0.34,variable=\t]({1*0.46*cos(\t r)+0*0.46*sin(\t r)},{0*0.46*cos(\t r)+1*0.46*sin(\t r)});
\draw [shift={(-0.46,2.04)}] plot[domain=3.13:5.48,variable=\t]({1*0.43*cos(\t r)+0*0.43*sin(\t r)},{0*0.43*cos(\t r)+1*0.43*sin(\t r)});
\draw [shift={(-2.06,-1.68)}] plot[domain=-0.07:2.26,variable=\t]({1*0.37*cos(\t r)+0*0.37*sin(\t r)},{0*0.37*cos(\t r)+1*0.37*sin(\t r)});
\draw [shift={(2.93,-0.79)}] plot[domain=1.74:3.18,variable=\t]({1*0.37*cos(\t r)+0*0.37*sin(\t r)},{0*0.37*cos(\t r)+1*0.37*sin(\t r)});
\draw [shift={(2.22,-0.81)}] plot[domain=0:2.34,variable=\t]({1*0.33*cos(\t r)+0*0.33*sin(\t r)},{0*0.33*cos(\t r)+1*0.33*sin(\t r)});
\begin{scriptsize}
\draw[color=ffqqtt] (-0.12,0.5) node {{\Large$\Omega$}};
\draw[color=black] (-1.01,2.3) node {{\Large$\partial\Omega_k$}};
\end{scriptsize}
\end{tikzpicture}
\caption{\label{Fig 3}}
    \end{figure}
   We will now generalize, via a limit procedure, the previous construction to arbitrary convex domains, or more generally
    domains satisfying a uniform ball condition. Before doing that, let us give a suitable definition for this last condition:
   \begin{definition}
   We say that a bounded domain $\Omega\subset\R^d$ satisfies an exterior ball condition of radius $r>0$ if for every point $x\in\partial\Omega$ there exists $y\in\R^d\setminus\Omega$ such that $|x-y|=r$ and $B(y,r)\cap\Omega=\emptyset$. This is equivalent to say that $\overline\Omega$ coincides with the union of all the bounded connected components of $\R^d\setminus \bigcup_{x\in K}B(x,r)$, where $K=\{x\in\R^d\,:\,d(x,\Omega)=r\}$.
   \end{definition}
   
   As a consequence of this definition, it is easy to see that for every $\Omega\subset\R^d$ satisfying an exterior ball condition of radius $r>0$ there exists a sequence of round polyhedra $\Omega_k$ such that 
   \begin{itemize}
   \item $\Omega\subset\Omega_k$,
   \item $\diam(\Omega_k)\leq \diam(\Omega)+4r$, 
   \item $\partial\Omega_k$ is made of parts of spheres of radius $r$,
   \item $\partial\Omega_k\to\partial\Omega$ in the Hausdorff sense, and $P_{\partial\Omega_k}(x)\to P_{\partial\Omega}(x)$ for a.e. $x\in\Omega$.
   \end{itemize}
   
   We can now state the following
   \begin{proposition}\label{existsf-}
   Suppose that $\Omega\subset\R^d$ is a compact domain satisfying a uniform exterior ball condition of radius $r>0$. Then there exists a larger domain $\widetilde\Omega$ and a constant $C$ only depending on $d,r$ and $L:=\diam(\Omega)$ such that for every positive measure $f^+\ll\lcal^d$ there exists $f^-\ll\lcal^d$, supported on $\widetilde\Omega\setminus \Omega$ with
    $$(\sigma(f^+,f^-))\res\Omega=\sigma(f^+,(P_{\partial\Omega})_\#f^+)$$
and, for every $p\in [1,+\infty]$,
$$ || f^-||_{L^p(\widetilde\Omega)}\leq C || f^+ ||_{L^p(\Omega)}$$
\end{proposition}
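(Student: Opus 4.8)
The plan is to realize $\Omega$ as a limit of round polyhedra and to transport the constructions of Propositions~\ref{Prop3.2} and \ref{PolyhedraL} to the limit by compactness and the stability Proposition~\ref{stabsigma}.

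First, I take the sequence of round polyhedra $\Omega_k\supset\Omega$ described just above: $\diam(\Omega_k)\leq L+4r$, $\partial\Omega_k$ is a finite union of spherical pieces of radius $r$, $\partial\Omega_k\to\partial\Omega$ in the Hausdorff sense, and $P_{\partial\Omega_k}(x)\to P_{\partial\Omega}(x)$ for a.e.\ $x\in\Omega$. For each $k$ let $T_k$ be the map of Proposition~\ref{Prop3.2} relative to $\Omega_k$ and set $f^-_k:=(T_k)_\#f^+$. From the explicit formula for $T_k$ one sees that $T_k(\Omega_k)$ lies in the $2r$-neighbourhood of $\partial\Omega_k$ and is disjoint (up to a null set) from $\overline{\Omega_k}\supset\Omega$; hence we may fix once and for all a ball $\widetilde\Omega$, depending only on $r$ and $L$, containing every $\Omega_k$ together with $T_k(\Omega_k)$, and each $f^-_k$ vanishes a.e.\ on $\Omega$. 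The constant in the estimate of Proposition~\ref{Prop3.2} depends only on $d$, $r$ and $\diam(\Omega_k)\leq L+4r$, so it can be chosen uniform in $k$ (and in $p$): $\|f^-_k\|_{L^p(\widetilde\Omega)}\leq C(d,r,L)\,\|f^+\|_{L^p(\Omega)}$. Finally, Proposition~\ref{PolyhedraL} applied to $\Omega_k$ yields, for every $k$,
\begin{equation}\label{planrestr}
\big(\sigma(f^+,f^-_k)\big)\res\Omega_k=\sigma\big(f^+,(P_{\partial\Omega_k})_\#f^+\big).
\end{equation}

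Next I pass to the limit. The $f^-_k$ are nonnegative, have fixed total mass $f^+(\Omega)$, and are supported in the fixed compact set $\widetilde\Omega$, so along a subsequence $f^-_k\deb f^-$ for some nonnegative measure $f^-$ supported in $\widetilde\Omega$; testing against indicators of open sets compactly contained in the interior of $\Omega$ gives $f^-\res\Omega=0$. The uniform $L^p$ bound survives in the limit: for $1<p\leq\infty$ because bounded sets of $L^p$ are weakly (resp.\ weakly-$*$) compact and the norm is lower semicontinuous, and for $p=1$ because the uniform lower bound on $|\det DT_k|$ together with the absolute continuity of $\int f^+$ makes $\{f^-_k\}$ uniformly integrable, so Dunford--Pettis gives $f^-\in L^1(\widetilde\Omega)$. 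In every case $f^-\ll\lcal^d$ and $\|f^-\|_{L^p(\widetilde\Omega)}\leq C(d,r,L)\|f^+\|_{L^p(\Omega)}$.

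It remains to take the limit in \eqref{planrestr}. Since $f^+\ll\lcal^d$ is fixed and all the measures live in $\widetilde\Omega$, Proposition~\ref{stabsigma} gives $\sigma(f^+,f^-_k)\deb\sigma(f^+,f^-)$ along this subsequence; and since $P_{\partial\Omega_k}\to P_{\partial\Omega}$ pointwise a.e., dominated convergence gives $(P_{\partial\Omega_k})_\#f^+\deb(P_{\partial\Omega})_\#f^+$, whence $\sigma(f^+,(P_{\partial\Omega_k})_\#f^+)\deb\sigma(f^+,(P_{\partial\Omega})_\#f^+)$, again by Proposition~\ref{stabsigma}. Both limiting transport densities are absolutely continuous (Proposition~\ref{prop transp dens}), hence charge $\partial\Omega$ with zero mass, and their total masses equal $\lim_k W_1(f^+,f^-_k)$ and $\lim_k W_1(f^+,(P_{\partial\Omega_k})_\#f^+)$ by continuity of $W_1$; this upgrades the two weak-$*$ convergences to convergence of the restrictions to the interior of $\Omega$ (Portmanteau, that set being a continuity set for the limit), so that restricting \eqref{planrestr} further to the interior of $\Omega\subset\Omega_k$ and letting $k\to\infty$ yields
$$\big(\sigma(f^+,f^-)\big)\res\Omega=\sigma\big(f^+,(P_{\partial\Omega})_\#f^+\big),$$
using on the right that the transport rays $[x,P_{\partial\Omega}(x)]$ stay in $\overline\Omega$, so that density is supported in $\overline\Omega$ and, being a.c., unchanged by restriction to the interior. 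The crux of the argument is precisely this last commutation of the weak-$*$ limit with restriction to $\Omega$: in general one cannot restrict a weak-$*$ limit, and here it works only because the limiting densities are absolutely continuous (so $\partial\Omega$ is negligible for them) and the masses converge; a minor side issue is the endpoint $p=1$, where one must check that $f^-$ is an $L^1$ function and not merely a measure.
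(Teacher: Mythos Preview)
Your proof is correct and follows exactly the paper's approximation-by-round-polyhedra strategy; the paper's own argument is considerably terser and simply invokes Proposition~\ref{stabsigma} after extracting a subsequence. You have carefully filled in the points the paper leaves implicit---the survival of the $L^p$ bound in the limit (including the Dunford--Pettis argument at $p=1$), the commutation of the weak-$*$ limit with restriction to $\Omega$ via absolute continuity of the limiting transport densities, and the support of $f^-$---and all of these are handled correctly.
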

\begin{proof}
It is enough to act by approximation. In the case where $\Omega$ is convex, we can write it as an intersection of half-spaces, and hence we can approximate $\Omega$ as the limit of a sequence of polyhedra $\Omega_k$, while in the case where $\Omega$ satisfies a uniform ball condition, we will write it as a limit of round polyhedra (see Figure \ref{Fig 3}) as we pointed out above.

Then, we just build the reflections maps $R_k$ (or $T_k$) as in Propositions \ref{convcase} and \ref{PolyhedraL}, and we get a sequence of measures $f^-_k$ supported on $\widetilde\Omega\setminus\Omega_k$ with $ || f^-_k||_{L^p(\widetilde\Omega)}\leq C || f^+||_{L^p(\Omega_k)}=C|| f^+||_{L^p(\Omega)}$. We also have 
  $$(\sigma(f^+,f^-_k))\res\Omega=\sigma(f^+,(P_{\partial\Omega_k})_\#f^+)\res\Omega.$$
  Then, it is enough to extract a converging subsequence from the sequence $f^-_k$, note that we have $(P_{\partial\Omega_k})_\#f^+\deb(P_{\partial\Omega})_\#f^+$, and use Proposition \ref{stabsigma}.
 \end{proof}
As a consequence, we can now obtain
\begin{theorem} Suppose that $\Omega$ satisfies a uniform exterior ball condition of radius $r>0$. Then, the transport density $\sigma$ between $f^+$ and  $(P_{\partial\Omega})_{\#}f^+$ is in $L^p(\Omega)$ provided $f^+$ is in $L^p(\Omega)$, and 
$$|| \sigma ||_{L^p(\Omega)}\leq C || f^+||_{L^p(\Omega)},$$
where the constant $C$ only depends on $d,r$ and $L=\diam(\Omega)$.
\end{theorem}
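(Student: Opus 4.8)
The plan is to obtain the statement as a short consequence of the symmetrization machinery already built: combine Proposition~\ref{existsf-}, which replaces the singular target $(P_{\partial\Omega})_\#f^+$ by an \emph{absolutely continuous} density $f^-$ on a slightly larger convex set, with the classical quantitative $L^p$ estimate for transport densities between two $L^p$ measures (the quantitative form of Proposition~\ref{prop transp dens}, see \cite{5,6,7,San09}).

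First I would fix $f^+\in L^p(\Omega)$ and apply Proposition~\ref{existsf-}: this yields a bounded convex domain $\widetilde\Omega$ and a constant $C=C(d,r,L)$, together with $f^-\ll\lcal^d$ supported on $\widetilde\Omega\setminus\Omega$ such that $\|f^-\|_{L^p(\widetilde\Omega)}\le C\|f^+\|_{L^p(\Omega)}$ and $(\sigma(f^+,f^-))\res\Omega=\sigma(f^+,(P_{\partial\Omega})_\#f^+)$. Since both $f^+$ (extended by $0$) and $f^-$ are $L^p$ densities on the convex set $\widetilde\Omega$, whose diameter is controlled by $L$ and $r$, Proposition~\ref{prop transp dens} applied on $\widetilde\Omega$ gives $\sigma(f^+,f^-)\in L^p(\widetilde\Omega)$; in quantitative form the cited estimates read $\|\sigma(f^+,f^-)\|_{L^p(\widetilde\Omega)}\le C'\big(\|f^+\|_{L^p}+\|f^-\|_{L^p}\big)$ with $C'$ depending on $d$, $\diam(\widetilde\Omega)$ and, a priori, on $p$. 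Restricting to $\Omega$ and using the displayed identity immediately gives $\sigma(f^+,(P_{\partial\Omega})_\#f^+)\in L^p(\Omega)$ and $\|\sigma(f^+,(P_{\partial\Omega})_\#f^+)\|_{L^p(\Omega)}\le C'(1+C)\|f^+\|_{L^p(\Omega)}$.

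To make the constant independent of $p$ I would argue by interpolation. The key observation is that the map $f^+\mapsto\sigma(f^+,(P_{\partial\Omega})_\#f^+)$ is linear and positive: the plan $(\id,P_{\partial\Omega})_\#f^+$ is optimal for \emph{every} $f^+\ll\lcal^d$, because $d(\cdot,\partial\Omega)$ is a $1$-Lipschitz function vanishing on $\partial\Omega$ which realizes the duality on this plan, and hence by the uniqueness part of Proposition~\ref{prop transp dens} the transport density is given by the explicit formula
$$\langle\sigma,\varphi\rangle=\int_\Omega\int_0^1\varphi\big(\omega_{x,P_{\partial\Omega}(x)}(t)\big)\,|x-P_{\partial\Omega}(x)|\,\dd t\,\dd f^+(x),$$
which is linear in $f^+$. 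This operator maps $L^1(\Omega)$ into $L^1(\Omega)$ with norm at most $L$, since $\int_\Omega\dd\sigma=W_1(f^+,(P_{\partial\Omega})_\#f^+)=\int_\Omega d(x,\partial\Omega)\,\dd f^+\le L\|f^+\|_{L^1}$, and $L^\infty(\Omega)$ into $L^\infty(\Omega)$ with norm at most some $C_\infty(d,r,L)$ by the case $p=\infty$ of the previous paragraph. By Riesz--Thorin interpolation its $L^p\to L^p$ norm is then at most $\max(L,C_\infty)$, a constant depending only on $d,r,L$; this also matches the route announced in the introduction, where one first settles $p=\infty$ and then deduces every $p$.

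The routine part here is everything involving the transport density on the auxiliary convex domain $\widetilde\Omega$, for which Propositions~\ref{existsf-} and \ref{prop transp dens} do all the work. I expect the only point requiring genuine care to be the bookkeeping of constants: one must check that the borrowed $L^p$ (and $L^\infty$) estimate on $\widetilde\Omega$ can be stated with a constant depending solely on $d$ and $\diam(\widetilde\Omega)$, and that $\widetilde\Omega$, as produced by Proposition~\ref{existsf-}, has diameter bounded in terms of $d,r,L$ only (indeed the round-polyhedra construction gives $\diam(\widetilde\Omega)\le\diam(\Omega)+O(r)$); granting this, the $p$-uniformity of $C$ follows from the interpolation step above, while the mere $L^p$ membership is immediate.
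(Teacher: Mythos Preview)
Your proposal is correct and follows essentially the same route as the paper: invoke Proposition~\ref{existsf-} to replace the singular target by an $L^p$ density $f^-$ on a larger convex $\widetilde\Omega$, then apply Proposition~\ref{prop transp dens} on $\widetilde\Omega$ and restrict to $\Omega$. The paper's proof is literally one line to this effect.

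The one place where you elaborate beyond the paper is the $p$-independence of the constant. The paper relies implicitly on the fact that the constant in $\|f^-\|_{L^p}\le C\|f^+\|_{L^p}$ is already $p$-free (this is explicit in Proposition~\ref{Prop3.2}) and on the quantitative form of Proposition~\ref{prop transp dens} from \cite{San09}, whose constant depends only on $d$ and the diameter. Your alternative via Riesz--Thorin, exploiting the linearity of $f^+\mapsto\sigma(f^+,(P_{\partial\Omega})_\#f^+)$, is exactly the route the paper discusses in Remark~\ref{reminfty} (there with Marcinkiewicz) but chose not to use; both approaches are valid and yield the same conclusion.
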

\begin{proof} We just need to use Proposition \ref{existsf-}, which guarantees that $\sigma$ is the restriction to $\Omega$ of the transport density between two $L^p$ measures.  
\end{proof}

We finish this section by two remarks on the proof of the above result.
\begin{remark}\label{reminfty}
In this particular case where the transport has not a fixed target measure on $\partial\Omega$, the transport density $\sigma$ linearly depends on $f^+$: in this case, $L^p$ estimates could be obtained via interpolation (via the celebrated Marcinkiewicz interpolation theorem, \cite{Mar,Zyg}) as soon as one has $L^1$ and $L^\infty$ estimates. Since $L^1$ (and $L^p$ for $p<d'$) are well-known, this means that it would be enough to write $L^\infty$ estimates. Yet, we did not see any significant simplification in concentrating on $L^\infty$ estimates instead of $L^p$, which is the reason why we decided not to evoke general interpolation theorems but we performed explicit estimates. In the same way, one could get also a $L^{p,q}$ estimates (see \cite{Lor,11} for the definition of the $L^{p,q}$ spaces).\end{remark} 

\begin{remark}
Another observation concerns the fact that we proved Proposition \ref{existsf-} by approximation. Apart from the fact that we first developed the convex case (just for the sake of simplicity), the reader could have preferred a direct formulation, valid in the case of an arbitrary domain $\Omega$ with an exterior ball condition, instead of passing through round polyhedra. This would be possible, by defining a map $T(x):=P_{\partial\Omega}(x)+c(P_{\partial\Omega}(x)-x)$, for small $c>0$. It can be proven, by studying the properties of the Jacobian of $P_{\partial\Omega}$, that $T$ is injective and $|\det(DT)|$ is bounded from below as soon as $c$ is small (depending on $L$ and $r$), but we considered that the proof in the case of round polyhedra was easier.
\end{remark}

 \section{An $L^\infty$ bound on $f^-$ with respect to $\haus^{d-1}\res(\partial\Omega)$ is not enough} \label{6}

 In this section we show that the $L^\infty$ estimates for the transport density (again, note by Remark \ref{reminfty} that the case $p=\infty$ is the most interesting one) fail if we only assume summability (or boundedness) of the densities of $f^+$ w.r.t. the Lebesgue measure on $\Omega$ and of $f^-$ w.r.t. the Hausdorff measure $\haus^{d-1}$ on $\partial\Omega$. Indeed, when we consider a domain $\Omega$ with a uniform exterior ball condition and we take $f^+\in L^\infty$, we can easily prove that $(P_{\partial\Omega})_\#f^+$ has a bounded density w.r.t.  $\haus^{d-1}\res\partial\Omega$. One could wonder whether this is the correct assumption to prove, for instance, $\sigma\in L^\infty$, and the answer is not.
 
 We will construct an example of $f^\pm$, where $f^+$ has a bounded density w.r.t. $\lcal^d$ in $\Omega$ and $f^-$ w.r.t.  $\haus^{d-1}\res\partial\Omega$ (for instance $\Omega$ is a big square containing the support of $f^+$ and its boundary contains the support of $f^-$), but $\sigma\notin L^\infty$ (we will also investigate the summability of $\sigma$).
  Set 
  $$f^+:=\lcal^2\res A,\quad f^-:=\mathcal{H}^1\res({[2,3]\times \{0\}})$$
 where $A$ is a trapeze with vertices $(0,0),\;(1,0),\;(1,\frac{4}{5})$ and $(0,\frac{6}{5} )$ (see Figure \ref{Fig 4}).

 For every $\varepsilon \in [0,1]$, let $ l_{\varepsilon}$ be the segment joining the two points $(0,w(\varepsilon))$ and $(2+\varepsilon,0)$, where
 $$w(\ve):=\frac{2 \varepsilon(2+\varepsilon)}{ 3+2\varepsilon}.$$
 First, it is easy to see that $f^+(\Delta_{\varepsilon})=f^-(\Delta_{\varepsilon})$ for every $\varepsilon \in [0,1]$ where $\Delta_{\varepsilon}$ is the triangle limited by $(0,0),\;(2+\varepsilon,0)$ and $(0,w(\ve) ).$  
 Then by \cite{8}, we can construct an optimal mapping $T$, which pushes $f^+$ to $f^-,$ with $\{ l_{\varepsilon} \}$ as its transfer rays.\\ 

 Let $\sigma$ be the transport density between $f^+$ and $f^-$.  For simplicity of notation, we denote the ball of center $(2,0)$ and radius $r$ by $ B_r$, and in this case we have 
 $$\sigma(B_{2r})=\int
 \mathcal{H}^1(B_{2r} \cap [x,y])\mathrm{d}\gamma(x,y) \geq r\gamma\left( \{(x,y): B_{r} \cap [x,y] \neq \emptyset\}\right),$$
 where we used the fact that for every $(x,y) $ s.t. $B_{r} \cap [x,y] \neq \emptyset$, we have $ \mathcal{H}^1(B_{2r} \cap [x,y]) \geq r $.

Then, note that there exists a value $\varepsilon_r \in (0,1)$ such that $\{x: B_{r} \cap [x,T(x)] \neq \emptyset\}=\Delta_{\varepsilon_r}$ and $l_{\varepsilon_r}$ is tangent to the ball $B_r$.
  \begin{figure}
\begin{tikzpicture}[line cap=round,line join=round,>=triangle 45,x=3.0cm,y=3.0cm]
\clip(-0.24,-0.35) rectangle (3.05,1.71);
\draw [color=ffqqqq] (0,1.2)-- (0,0);
\draw [color=ffqqqq] (0,0)-- (1,0);
\draw [color=ffqqqq] (1,0)-- (1,0.8);
\draw [color=ffqqqq] (1,0.8)-- (0,1.2);
\draw [color=ffwwqq] (2,0)-- (3,0);
\draw [shift={(2,0)},color=blue]  plot[domain=0:3.148,variable=\t]({1*0.15*cos(\t r)+0*0.15*sin(\t r)},{0*0.15*cos(\t r)+1*0.15*sin(\t r)});
\draw [shift={(2,0)},color=blue]  plot[domain=0:3.148,variable=\t]({1*0.3*cos(\t r)+0*0.3*sin(\t r)},{0*0.3*cos(\t r)+1*0.3*sin(\t r)});
\draw (0,0.73)-- (2.56,0);
\draw (2.3,0)-- (0,0.51);
\draw (0,0.32)-- (2.15,0);
\draw (0,0.14)-- (2.04,0);
\begin{scriptsize}
\draw[color=ffqqqq] (0.58,1.1);
\draw[color=ffwwqq] (2.51,-0.05);
\fill [color=ttfftt] (0,0.73) circle (1.5pt);
\fill [color=ttfftt] (0.48,0.59) circle (1.5pt);
\draw[color=black] (0.5,0.475) node {{\Large$x$}};
\fill [color=ttfftt] (1,0.44) circle (1.5pt);
\fill [color=ttfftt] (0,0.51) circle (1.5pt);
\fill [color=ttfftt] (0.48,0.41) circle (1.5pt);
\fill [color=ttfftt] (1,0.29) circle (1.5pt);
\fill [color=ttfftt] (0,0.32) circle (1.5pt);
\fill [color=ttfftt] (0.48,0.25) circle (1.5pt);
\fill [color=ttfftt] (1,0.17) circle (1.5pt);
\fill [color=ttfftt] (1,0.07) circle (1.5pt);
\fill [color=ttfftt] (0.48,0.11) circle (1.5pt);
\fill [color=ttfftt] (0,0.14) circle (1.5pt);
\fill [color=ffffqq] (2.56,0) circle (1.5pt);
\draw[color=red,very thick] (2.04,0)--(3,0);
\draw[color=black] (2.3,-0.1) node {{\Large$T(x)$}};
\fill [color=ffffqq] (2.3,0) circle (1.5pt);
\fill [color=ffffqq] (2.15,0) circle (1.5pt);
\fill [color=ffffqq] (2.04,0) circle (1.5pt);
\draw[color=ffqqqq] (0.4,0.9) node {{\Large$f^+$}};
\draw[color=ffwwqq] (3,-0.125) node {{\Large$f^-$}};
\draw[color=black] (2,0.4) node {{\Large$B_{2 r}$}};
\end{scriptsize}
\end{tikzpicture}
\caption{\label{Fig 4}}
 \end{figure}
 Then 
 $$\sigma(B_{2r}) \geq r f^+(\{x:\; B_{r} \cap [x,T(x)] \neq \emptyset\})=r f^+(\Delta_{\varepsilon_r}) \simeq r \varepsilon_r.$$
 If we denote by $\theta$ the angle between the two segments $[(0,0),(2+\varepsilon_r,0)]$ and $l_{\varepsilon_r}$, then we have 
 $$ \sin(\theta)=\frac{r}{\varepsilon_r}$$ and $ \tan(\theta)\simeq \varepsilon_r \mbox{ for } r \;\mbox{small enough}.$
  
 But for $r$ small enough, $\theta\simeq 0$ and we get $ \varepsilon_r \simeq r^{\frac{1}{2}}$. Thus, for $r$ small enough 
\begin{equation}\label{sig32}
\sigma(B_{2r}) \geq C r^{\frac{3}{2}},
\end{equation}
 which implies that $\sigma$ cannot be bounded in a neighborhood of $(2,0)$, otherwise we would have 
 $$  c r^2=|| \sigma ||_{L^{\infty}(\Delta_1)}|B_{2r}| \geq \sigma(B_{2r}) \geq C r^{\frac{3}{2}}$$
 which is a contradiction for small $r$.
 In addition, it is possible to see $\sigma \notin L^{4}(\Delta_1)$, otherwise, by H\"older inequality, we would get
 $$\frac{\sigma(B_{2r})}{r^{3/2}}\leq \frac{|B_{2r}|^{3/4}\left(\int_{B_{2r}}\sigma^4\right)^{1/4}}{r^{3/2}}\to 0,$$
 which is a contradiction with \eqref{sig32}. 
 
 Actually, a finer analysis even proves $\sigma \in L^{p}(\Delta_1)$ if and only if $p<3$.
 To prove this we need to use heavier computations. Fix $\varepsilon_0$ small enough and take $x \in \Delta_{\varepsilon_0}$: there exist $\varepsilon \in [0,\varepsilon_0]$ and $s \in [0,1]$ such that $$x=(1-s)(2+\varepsilon,0) + s(0,w(\varepsilon)).$$
 For all $\varphi \in C(\Delta_{\varepsilon_0})$ we have 
 $$< \sigma,\varphi>:=\int_{0}^{1}\int_{\Delta_{\varepsilon_0}}|x-T(x)|\varphi((1-t)x+t T(x))f^+(x)\mathrm{d}x\mathrm{d}t$$
and by a change of variable, we get, in the variable $(\ve,s)$,
 \begin{eqnarray*}
 \sigma(\varepsilon,s)&=&\frac{\sqrt{(2+\varepsilon)^2 + w(\varepsilon)^2}\int_{s}^{1} f^+((1-t)(2+\varepsilon),t w(\varepsilon))|J(t,\varepsilon)|\mathrm{d}t}{|J(s,\varepsilon)|}\\
& =&\frac{\sqrt{(2+\varepsilon)^2 + w(\varepsilon)^2}\int_{1-\frac{1}{2+\varepsilon}}^{1}|J(\varepsilon,t)|\mathrm{d}t}{|J(\varepsilon,s)|} \simeq \frac{\int_{1-\frac{1}{2+\varepsilon}}^{1}|J(\varepsilon,t)|\mathrm{d}t}{|J(\varepsilon,s)|},
\end{eqnarray*}
where $|J|:=|D_{(\varepsilon,s)}(x_1,x_2)|$. Using $|J(\varepsilon,s)| \simeq s + \varepsilon $
we get $$\sigma(\varepsilon,s) \simeq \frac{1}{s+\varepsilon}$$
 and $$|| \sigma ||_{L^{p}(\Delta_1)}^p\; \simeq \int_{0}^{\varepsilon_0}\int_{0}^{\varepsilon_0}\frac{1}{(s+\varepsilon)^{p-1}}\mathrm{d}s\mathrm{d}\varepsilon \simeq \int_{0}^{\varepsilon_0}\frac{1}{\varepsilon^{p-2}}\mathrm{d}\varepsilon.  $$ 
 Notice that as $d=2$ and $f^+ \in  L^{\infty}(\Delta_1)$, by \cite{San09} 
we know that automatically $\sigma \in 
 L^{p}(\Delta_1)$ for all $p<2$. The fact that here we get $\sigma \in L^{p}(\Delta_1)$ for all $p<3$ depends on the fact that we send a mass $f^+$ to a mass $f^-$ which is distributed on a segment, and not to a Dirac mass. In some sense, we are not in the worst possible case! 
\end{document}